\documentclass[psamsfonts]{amsart}
\usepackage{latexsym, amssymb, amsmath, mathrsfs}

\keywords{twisted group algebra, Cayley-Dickson algebra, Clifford
algebra, quaternions, octonions, sedenions, geometric algebra}
\newtheorem{theorem}{Theorem}[section]

\theoremstyle{definition}
\newtheorem{definition}[theorem]{Definition}
\newtheorem{corollary}[theorem]{Corollary}

\newtheorem{question}[theorem]{Question}
\newtheorem{notation}[theorem]{Notation}
\numberwithin{equation}{section}
\mathsurround 1.5pt
\DeclareMathOperator{\sgn}{\alpha}
\DeclareMathOperator{\cyd}{\gamma}
\DeclareMathOperator{\clf}{\phi}
\DeclareMathOperator{\hdm}{\eta}

\newcommand{\bracket}[1]{[\,#1\,]}
\newcommand{\norm}[1]{\|\,#1\,\|}
\newcommand{\ip}[2]{\left\langle#1,#2\right\rangle}
\newcommand{\conj}[1]{{#1}^*} 
\newcommand{\Sum}[1]{\sum_{#1}}
\newcommand{\Z}[1]{\mathbb{Z}_2^#1}
\newcommand{\cds}[1]{\mathbb{S}_{#1}}
\newcommand{\sob}[1]{{\langle#1\rangle}}
\newcommand{\sbf}[1]{(-1)^{\langle#1\rangle}}                               
\newcommand{\tga}[3]{\left[#1,#2,#3\right]}
\newcommand{\tgar}[2]{\left[#1,#2\right]}
\newcommand{\basis}[1]{\mathcal{#1}}
\newcommand{\twisteq}[2]{#1\vee#2}
\newcommand{\twistne}[2]{#1\wedge#2}

\makeindex
\begin{document}

\title{Properly Twisted Groups and their Algebras}

\author{ John W. Bales (2006)}
\address{Department of Mathematics\\
         Tuskegee University\\
	 Tuskegee, AL 36088\\
	 USA}
\email{jbales@tuskegee.edu}

\subjclass[2000]{16S99,16W99}
\date{}

\begin{abstract}
A proper twist on a group $G$ is a function $\sgn:G\times G\mapsto\{-1,1\}$ with the property that, if $p,q\in G$ then
$\sgn(p,q)\sgn(q,q^{-1})=\sgn(pq,q^{-1})$ and $\sgn(p^{-1},p)\sgn(p,q)=\sgn(p^{-1},pq).$ The span $V$ of a set of unit
vectors $\basis{B}=\{i_p\,|\,p\in G\}$ over a ring $\mathbb{K}$ with product $xy=\Sum{p,q}\sgn(p,q)x_py_qi_{pq}$ is a
twisted group algebra. If the twist $\sgn$ is proper, then the conjugate defined by
$\conj{x}=\Sum{p}\sgn(p,p^{-1})\conj{x}_pi_{p^{-1}}$ and inner product $\ip{x}{y}=\Sum{p}x_p\conj{y}_p,$ satisfy the
adjoint properties $\ip{xy}{z}=\ip{y}{\conj{x}z}$ and $\ip{x}{yz}=\ip{x\conj{z}}{y}$ for all $x,y,z\in V$. Proper twists
on $\Z{N}$ over the reals produce the complex numbers, quaternions, octonions and all higher order Cayley-Dickson and
Clifford algebras.
\end{abstract} 
\maketitle
\section{Introduction}

A vector space $V$ with orthonormal basis $\basis{B}$ can be transformed into an algebra by defining a product for the elements of $\basis{B}$ and extending that product to $V.$ An obvious way to define a product on $\basis{B}$ is to associate each element of $\basis{B}$ with a corresponding element of a group $G$ so that given $p,q\in G$ and $i_p,i_q\in \basis{B}$, one has the product $i_pi_q=i_{pq}$. Extending this product to $V$ transforms $V$ into a \emph{group algebra}. This scheme may be modified by adding a `twist'. Define a function $\alpha: G\times G\mapsto \{-1,1\}$ and define the product $i_pi_q=\alpha(p,q)i_{pq}.$ Extending this product to $V$ transforms $V$ into a \emph{twisted} group algebra \cite{BS1970}. Specific properties of the twist give rise to specific properties of the algebra. For each non-negative integer $N$, the Euclidean vector spaces of dimension $2^N$ may be transformed into either a Cayley-Dickson or Clifford algebra by an appropriate twist on the direct product $\Z{N}$. 

Given a twisted group algebra $V$ with element $x$, the transformation $L_x: V\mapsto V$ defined by $L_x(y)=xy$ is a linear transformation with standard matrix $[x]$. The product of elements $x,y\in V$ is $xy=[x]y.$ A result of Wedderburn \cite{W1925} guarantees that $\norm{xy}\le\sqrt{n}\,\norm{x}\norm{y}$ where $n$ is the order of $[x]$. If the twist is ``associative'' then $V$ is associative and $[xy]=[x][y]$ making $V$ isomorphic to a ring of square matrices. The Clifford algebras fall into this category, as well as the reals, complex numbers and quaternions. Cayley-Dickson algebras from the octonions upwards are non-associative.

 \section{Twisted Group Algebras}

 Let $V$ denote an $n$-dimensional inner product space over the ring $\mathbb{K}.$ Let $G$
 denote a group having only $n$ elements. Let $\mathcal{B}=\{i_p \mid p\in G \}$ denote a set of
 unit basis vectors for  $V.$ Then, for each $x\in V,$ there exist
 elements $\{x_p \mid x_p\in \mathbb{K}, p\in G \}$ such that
 $x=\Sum{p}x_pi_p.$\\
 Define a product on the elements of $\mathcal{B}$ and their negatives in the following manner.
 \\[12pt]
 Let $\sgn: G \times G \mapsto \{ -1,1\}$ denote a \emph{sign} function or `twist' on $G.$ Then for $p,q\in G$ define the product of $i_p$ and $i_q$ as follows.
 \begin{definition}
   \[i_p i_q = \sgn(p,q)i_{pq}\]
 \end{definition}
 Extend this product to $V$ in the natural way. That is,
 \begin{definition}
    \begin{align*}
    xy
    &= \left( \Sum{p}x_p i_p \right)\left(\Sum{q} y_q i_q \right)\\
    &= \Sum{p,q} x_p y_q i_p  i_q\\
    &= \Sum{p,q} \sgn(p,q) x_p y_q i_{pq}
    \end{align*}
 \end{definition}
 In defining the product this way, one gets the closure and distributive properties ``for free'', as well as $(cx)y=x(cy)=c(xy)$
 
 This product transforms the vector space $V$ into a \emph{twisted group algebra}. The properties of the algebra depend upon the properties of the twist and the properties of $G.$

 \begin{notation}
  Given a group $G$, twist $\alpha$ on $G$ and ring $\mathbb{K}$, let $\tga{G}{\alpha}{\mathbb{K}}$ denote the corresponding twisted group algebra. If $\mathbb{K}=\mathbb{R}$, abbreviate this notation $\tgar{G}{\alpha}$.
 \end{notation}
 \begin{definition}
 Let $x\in V=\tga{G}{\alpha}{\mathbb{K}}$. Define the matrix $\bracket{x}:G\times G\mapsto \mathbb{K}$ such that for $r,s\in G$, $\bracket{x}(r,s)=\sgn(rs^{-1},s)x_{rs^{-1}}.$
 \end{definition}
 \begin{theorem}\label{T:stMat}
 For $x,y\in V=\tga{G}{\alpha}{\mathbb{K}}$, $xy=\bracket{x}y.$
 \end{theorem}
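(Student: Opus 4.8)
The plan is to prove this by a direct computation: expand the algebra product $xy$ using its definition, then reindex the resulting double sum so that it manifestly becomes the matrix--vector product $\bracket{x}y$.

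First I would start from the definition of the product on $V$, namely $xy=\Sum{p,q}\sgn(p,q)x_py_qi_{pq}$, and read off the coefficient of each basis vector $i_r$. For this I reindex the sum: since $G$ is a group, for each fixed $r,q\in G$ there is exactly one $p\in G$ with $pq=r$, namely $p=rq^{-1}$; equivalently $(p,q)\mapsto(pq,q)$ is a bijection of $G\times G$ onto itself. Substituting $p=rq^{-1}$ and letting $r$ range over $G$ in place of the product $pq$ gives
\[
xy=\Sum{r}\left(\Sum{q}\sgn(rq^{-1},q)\,x_{rq^{-1}}\,y_q\right)i_r .
\]

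Next I would compare the parenthesized coefficient with the definition of the matrix $\bracket{x}$. By definition $\bracket{x}(r,s)=\sgn(rs^{-1},s)x_{rs^{-1}}$, so the inner sum equals $\Sum{s}\bracket{x}(r,s)y_s$, which is precisely the $r$-th component of the vector obtained by applying $\bracket{x}$ to the coordinate vector of $y$. Hence the coefficient of $i_r$ in $xy$ agrees with the $r$-th coordinate of $\bracket{x}y$ for every $r\in G$, and therefore $xy=\bracket{x}y$.

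The computation is routine; the only step requiring care is the reindexing, where one must invoke the group structure of $G$ to see that $(p,q)\mapsto(pq,q)$ is a bijection — this is exactly what legitimizes the substitution $p=rq^{-1}$ and guarantees that no terms are lost or double-counted. Everything else is bookkeeping with the definitions of the product and of $\bracket{x}$.
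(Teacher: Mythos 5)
Your proof is correct and follows essentially the same route as the paper's: both substitute $p=rq^{-1}$ to reindex the double sum over $(p,q)$ as a sum over $(r,q)$ and then identify the resulting coefficient of $i_r$ with the $r$-th entry of $\bracket{x}y$. Your added remarks about the bijectivity of $(p,q)\mapsto(pq,q)$ just make explicit a step the paper leaves implicit.
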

 \begin{proof}
 Given $xy=\Sum{p,q} \sgn(p,q)x_p y_q  i_{pq}$ let $pq=r$. Then $p=rq^{-1},$ so 
 \[ xy=\Sum{r,q}\sgn(rq^{-1},q)x_{rq^{-1}}y_qi_r=\bracket{x}y. \]
 \end{proof}
 \begin{theorem}\label{T:iprs}
  $\bracket{i_p}(r,s)=\begin{cases}
                  \sgn(p,s) &\text{if\ }ps=r\\
		  0         &\text{otherwise}
                 \end{cases}$
 \end{theorem}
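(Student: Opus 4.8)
The plan is to read the result straight off the definition of $\bracket{x}$, specialized to $x=i_p$. First I would record the only fact about coordinates that is needed: since $i_p$ is a basis vector in $\basis{B}$, its expansion $i_p=\Sum{q}(i_p)_q\,i_q$ forces the coordinate $(i_p)_q$ to equal $1$ when $q=p$ and $0$ otherwise. Substituting $x=i_p$ into the defining formula $\bracket{x}(r,s)=\sgn(rs^{-1},s)\,x_{rs^{-1}}$ then gives $\bracket{i_p}(r,s)=\sgn(rs^{-1},s)\,(i_p)_{rs^{-1}}$.

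Next I would analyze when this right-hand side is nonzero. The factor $(i_p)_{rs^{-1}}$ vanishes unless $rs^{-1}=p$; because $G$ is a group, right multiplication by $s$ is a bijection, so $rs^{-1}=p$ is equivalent to $r=ps$. In that single case $(i_p)_{rs^{-1}}=1$ and $\sgn(rs^{-1},s)=\sgn(p,s)$, whence $\bracket{i_p}(r,s)=\sgn(p,s)$; in every other case the product is $0$. This is precisely the asserted case split.

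As a consistency check one may instead invoke Theorem~\ref{T:stMat}: applying it with $x=i_p$ and $y=i_s$ gives $i_pi_s=\bracket{i_p}i_s$, whose $r$-th coordinate is $\Sum{t}\bracket{i_p}(r,t)(i_s)_t=\bracket{i_p}(r,s)$, while $i_pi_s=\sgn(p,s)i_{ps}$ has $r$-th coordinate $\sgn(p,s)$ when $r=ps$ and $0$ otherwise. I expect no genuine obstacle in either route — the only point deserving a little care is the index bookkeeping, namely that solving $rs^{-1}=p$ for $r$ yields $r=ps$ and not $r=sp$.
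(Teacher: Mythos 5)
Your proposal is correct and is exactly the argument the paper intends: the theorem is stated without proof because it follows immediately from substituting $x=i_p$ into Definition $\bracket{x}(r,s)=\sgn(rs^{-1},s)x_{rs^{-1}}$ and noting $(i_p)_{rs^{-1}}$ is nonzero precisely when $rs^{-1}=p$, i.e.\ $r=ps$. Your index bookkeeping is right, and the consistency check via Theorem~\ref{T:stMat} is a nice (if unnecessary) confirmation.
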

 \begin{corollary}\label{C:pqrs}
  $\bracket{\sgn(p,q)i_{pq}}(r,s)=\begin{cases}
                          \sgn(p,q)\sgn(pq,s) &\text{if\ }pqs=r\\
                          0          &\text{otherwise}
                         \end{cases}$
 
 \end{corollary}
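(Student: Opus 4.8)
The plan is to treat this as an immediate consequence of Theorem~\ref{T:iprs} together with the fact that the assignment $x\mapsto\bracket{x}$ is $\mathbb{K}$-linear. First I would observe that $\sgn(p,q)\in\{-1,1\}\subseteq\mathbb{K}$ is a scalar, so $\sgn(p,q)i_{pq}$ is a genuine element of $V$: it is the vector whose $(pq)$-coordinate is $\sgn(p,q)$ and whose other coordinates all vanish. From the Definition of $\bracket{x}$ we have $\bracket{x}(r,s)=\sgn(rs^{-1},s)x_{rs^{-1}}$, which is visibly $\mathbb{K}$-linear in the coordinates $(x_p)_{p\in G}$ of $x$; hence $\bracket{\sgn(p,q)i_{pq}}=\sgn(p,q)\bracket{i_{pq}}$.

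Next I would invoke Theorem~\ref{T:iprs} with the element $pq\in G$ playing the role of $p$, giving $\bracket{i_{pq}}(r,s)=\sgn(pq,s)$ when $(pq)s=r$ and $0$ otherwise. Multiplying through by the scalar $\sgn(p,q)$ then yields exactly the claimed formula: $\bracket{\sgn(p,q)i_{pq}}(r,s)=\sgn(p,q)\sgn(pq,s)$ when $pqs=r$, and $0$ otherwise.

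An equally short alternative, avoiding the appeal to linearity, is to compute straight from the Definition: with $x=\sgn(p,q)i_{pq}$ one has $x_{rs^{-1}}=\sgn(p,q)$ precisely when $rs^{-1}=pq$, i.e.\ when $r=pqs$, and $x_{rs^{-1}}=0$ otherwise; substituting into $\bracket{x}(r,s)=\sgn(rs^{-1},s)x_{rs^{-1}}$ and using $rs^{-1}=pq$ in the surviving case gives the result.

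I do not anticipate any real obstacle here — the statement is a routine corollary. The only point that warrants a moment's care is the bookkeeping of the group condition: Theorem~\ref{T:iprs} is stated in terms of $ps=r$, and one must correctly carry this over as $(pq)s=r$, equivalently $r=pqs$, when substituting $pq$ for $p$, so that the indexing in the two cases of the piecewise formula matches up with the statement being proved.
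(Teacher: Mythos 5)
Your proposal is correct and matches the paper's intent exactly: the paper states this as an unproved corollary of Theorem~\ref{T:iprs}, precisely because it follows by substituting $pq$ for $p$ there and pulling the scalar $\sgn(p,q)$ through the linear map $x\mapsto\bracket{x}$, which is what you do. Your alternative direct computation from the definition of $\bracket{x}$ is also valid and equally short.
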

 \begin{theorem}\label{T:ipqrs}
 $\left(\bracket{i_p}\bracket{i_q}\right)(r,s)=\begin{cases}
                        \sgn(p,qs)\sgn(q,s) &\text{if\ }pqs=r\\
                        0         &\text{otherwise}
                 \end{cases}$
 \end{theorem}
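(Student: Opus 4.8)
The plan is to compute the matrix product entrywise and apply Theorem~\ref{T:iprs} twice. Since the matrices here are indexed by $G\times G$, the $(r,s)$ entry of the product is
\[
\left(\bracket{i_p}\bracket{i_q}\right)(r,s)=\Sum{t}\bracket{i_p}(r,t)\,\bracket{i_q}(t,s),
\]
the sum running over all $t\in G$.

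First I would apply Theorem~\ref{T:iprs} to the second factor: $\bracket{i_q}(t,s)=\sgn(q,s)$ when $qs=t$ and $0$ otherwise. Since $G$ is a group, $qs$ is a well-defined element of $G$, so it is the unique index contributing to the sum, which therefore collapses to the single term $\bracket{i_p}(r,qs)\,\sgn(q,s)$. Next I would apply Theorem~\ref{T:iprs} to the surviving factor: $\bracket{i_p}(r,qs)=\sgn(p,qs)$ when $p(qs)=r$ and $0$ otherwise. Using associativity in $G$ to rewrite $p(qs)=(pq)s=pqs$, the condition $p(qs)=r$ becomes precisely $pqs=r$. Combining the two steps gives $\left(\bracket{i_p}\bracket{i_q}\right)(r,s)=\sgn(p,qs)\,\sgn(q,s)$ when $pqs=r$ and $0$ otherwise, which is the claim.

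There is essentially no obstacle here beyond the bookkeeping of which index survives the sum; the only point requiring a moment's care is that the matrix product is summed over the full index set $G$, so one must note that exactly one value of $t$ (namely $t=qs$) makes $\bracket{i_q}(t,s)$ nonzero. As an alternative one could invoke Theorem~\ref{T:stMat}, observing that $\bracket{i_p}\bracket{i_q}$ is the standard matrix of the composite map $y\mapsto i_p(i_qy)$ and evaluating on the basis vector $i_s$ using Definition~1.1 together with Corollary~\ref{C:pqrs}; but the direct entrywise computation above is the shortest route.
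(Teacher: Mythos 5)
Your entrywise computation is correct: the sum over $t$ collapses to the single term $t=qs$ by Theorem~\ref{T:iprs}, and a second application of that theorem gives exactly the stated formula, with group associativity turning $p(qs)=r$ into $pqs=r$. The paper states this theorem without proof, and your argument is the evident intended one; it also lines up correctly with Corollary~\ref{C:pqrs} to yield Corollary~\ref{C:assoc}.
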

\begin{corollary}\label{C:assoc}
$\bracket{i_p}\bracket{i_q}=\sgn(p,q)\bracket{i_{pq}}$ provided $\sgn(p,q)\sgn(pq,s)=\sgn(p,qs)\sgn(q,s)$ for $p,q,s\in G$.
\end{corollary}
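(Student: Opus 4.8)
The plan is to prove the identity by comparing the two matrices entry by entry, using the closed forms for $\bracket{i_p}\bracket{i_q}$ and for $\bracket{\sgn(p,q)i_{pq}}$ that are already in hand.

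First I would record the (trivial but necessary) observation that the assignment $x\mapsto\bracket{x}$ is $\mathbb{K}$-linear: by the defining formula $\bracket{x}(r,s)=\sgn(rs^{-1},s)x_{rs^{-1}}$, each entry depends linearly on the coordinates of $x$. Since $\sgn(p,q)\in\{-1,1\}\subseteq\mathbb{K}$, this gives $\bracket{\sgn(p,q)i_{pq}}=\sgn(p,q)\bracket{i_{pq}}$. Consequently it is enough to prove $\bracket{i_p}\bracket{i_q}=\bracket{\sgn(p,q)i_{pq}}$ as functions on $G\times G$, and the desired statement follows upon dividing out the sign.

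Next I would fix $r,s\in G$ and evaluate both sides. By Theorem~\ref{T:ipqrs}, $\left(\bracket{i_p}\bracket{i_q}\right)(r,s)$ equals $\sgn(p,qs)\sgn(q,s)$ when $pqs=r$ and is $0$ otherwise. By Corollary~\ref{C:pqrs}, $\bracket{\sgn(p,q)i_{pq}}(r,s)$ equals $\sgn(p,q)\sgn(pq,s)$ when $pqs=r$ and is $0$ otherwise; here one uses that the index condition ``$pqs=r$'' in Theorem~\ref{T:ipqrs} is literally the same as the condition ``$(pq)s=r$'' appearing in Corollary~\ref{C:pqrs}. Thus the two functions vanish on exactly the same pairs $(r,s)$ — those with $pqs\neq r$ — and on the remaining pairs they take the respective values $\sgn(p,qs)\sgn(q,s)$ and $\sgn(p,q)\sgn(pq,s)$. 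The hypothesis $\sgn(p,q)\sgn(pq,s)=\sgn(p,qs)\sgn(q,s)$ for all $s\in G$ is precisely the assertion that these two values coincide, so the matrices agree entry by entry.

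I do not anticipate any real obstacle: the argument is a direct comparison of two case-defined formulas. The only points that warrant explicit mention are the linearity of $x\mapsto\bracket{x}$ (which lets the scalar $\sgn(p,q)$ be pulled outside the bracket) and the matching of the two group-theoretic index conditions; once these are noted, the hypothesis closes the proof immediately.
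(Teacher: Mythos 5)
Your argument is correct and is exactly the one the paper intends: the corollary is left as an immediate consequence of Theorem~\ref{T:ipqrs} and Corollary~\ref{C:pqrs}, obtained by comparing the two case-defined matrices entry by entry and invoking the hypothesis to match the nonzero values. Your explicit remarks on the linearity of $x\mapsto\bracket{x}$ and the agreement of the index conditions are sound and fill in the only details the paper omits.
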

\section{Symmetric and anti-symmetric products}
\begin{definition}
 For group $G$, the \emph{interior} of $G\times G$, written $G_0^2$, is the set of all ordered pairs $(p,q)\in G\times G$ such that $p\ne e$, $q\ne e$ and $pq\ne e$ or equivalently, $e\ne p\ne q^{-1}\ne e.$
\end{definition}

\begin{definition}
 For group $G$ and twist $\sgn$, the \emph{symmetric} part of $G\times G$ with respect to $\sgn$, written $G^2+\sgn$, is the set of all ordered pairs in $G\times G$ such that $\sgn(p,q)=\sgn(q,p).$ The \emph{anti-symmetric} part of $G\times G$ with respect to $\sgn$, written $G^2-\sgn$, is the set of all ordered pairs in $G\times G$ such that $\sgn(p,q)\ne\sgn(q,p).$
\end{definition}
Every twisted product can be decomposed into its symmetric and anti-symmetric parts.
\begin{definition}
 For $x,y\in \tga{G}{\alpha}{\mathbb{K}}$, define the \emph{symmetric product} $x\vee y$ as
 \begin{equation*}
  x\vee y = \sum_{p,q}^{G^2+\sgn} \sgn(p,q)x_py_qi_{pq}
 \end{equation*}
and define the \emph{anti-symmetric product} $x\wedge y$ as
 \begin{equation*}
  x\wedge y = \sum_{p,q}^{G^2-\sgn} \sgn(p,q)x_py_qi_{pq}
 \end{equation*}
\end{definition}
The following results are immediate.
\begin{theorem}
 \begin{enumerate} For $x,y\in \tga{G}{\alpha}{\mathbb{K}}$,
  \item $xy=x\vee y + x\wedge y$
  \item $x\vee y = y\vee x$
  \item $x\wedge y = -(y\wedge x)$
  \item $x\vee y = \dfrac{xy+yx}{2}$
  \item $x\wedge y = \dfrac{xy-yx}{2}$
 \end{enumerate}

\end{theorem}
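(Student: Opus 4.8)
The plan is to obtain statement~(1) by splitting the defining sum of the product along a natural partition of the index set $G\times G$, to obtain (2) and (3) by relabelling summation variables, and then to deduce (4) and (5) purely formally from (1)--(3). For~(1), observe that $G\times G$ is the disjoint union of $G^2+\sgn$ and $G^2-\sgn$: for any pair $(p,q)$ the values $\sgn(p,q)$ and $\sgn(q,p)$ are either equal or unequal, and exactly one of these two alternatives holds. Splitting the sum $xy=\Sum{p,q}\sgn(p,q)x_py_qi_{pq}$ over this partition produces precisely $xy=x\vee y+x\wedge y$.

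For (2) and (3) the key point is that each of the two index sets is invariant under the transposition $(p,q)\mapsto(q,p)$, because the defining conditions $\sgn(p,q)=\sgn(q,p)$ and $\sgn(p,q)\ne\sgn(q,p)$ are themselves symmetric in $p$ and $q$. Starting from $y\vee x=\sum_{p,q}^{G^2+\sgn}\sgn(p,q)y_px_qi_{pq}$ and interchanging the names of the two summation variables therefore leaves the range of summation unchanged and yields $\sum_{p,q}^{G^2+\sgn}\sgn(q,p)x_py_qi_{qp}$; on the symmetric part $\sgn(q,p)=\sgn(p,q)$, while $i_{qp}=i_{pq}$ by commutativity of $G$, so this is exactly $x\vee y$, proving (2). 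The identical relabelling applied to $y\wedge x$ instead uses that two unequal elements of $\{-1,1\}$ must satisfy $\sgn(q,p)=-\sgn(p,q)$ on the anti-symmetric part; the swapped sum is then $-\sum_{p,q}^{G^2-\sgn}\sgn(p,q)x_py_qi_{pq}=-(x\wedge y)$, proving (3).

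Finally, (4) and (5) follow by elementary algebra. Applying (1) to the product $yx$ and then using (2) and (3) gives $yx=x\vee y-x\wedge y$; adding this to $xy=x\vee y+x\wedge y$ and halving yields $x\vee y=\frac{xy+yx}{2}$, and subtracting yields $x\wedge y=\frac{xy-yx}{2}$. The computation is routine throughout; the only step needing real care is the relabelling in (2) and (3), where one must verify that the restricted index sets are closed under transposition and must invoke the commutativity of $G$ to replace $i_{qp}$ by $i_{pq}$. This is the sole place where any property of $G$ beyond the group axioms is used, and it is automatic for the groups $\Z{N}$ that underlie the Cayley-Dickson and Clifford constructions.
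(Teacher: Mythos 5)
Your proof is correct and is exactly the routine verification the paper leaves unwritten (it declares these results ``immediate''): partition $G\times G$ into the symmetric and anti-symmetric parts for (1), relabel the transposition-invariant index sets for (2) and (3), and derive (4) and (5) algebraically. You are also right to flag that replacing $i_{qp}$ by $i_{pq}$ requires $G$ to be abelian --- a hypothesis the theorem omits but genuinely needs (with the trivial twist, (2) would assert commutativity of an arbitrary group algebra), and one that holds for the groups $\Z{N}$ actually used in the paper.
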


\begin{definition}
  For group $G$ and twist $\sgn$, the \emph{symmetric interior} of $G\times G$ with respect to $\sgn$, written $G_0^2+\sgn$, is the intersection of the symmetric part of $G\times G$ with the interior of $G\times G.$  The \emph{anti-symmetric interior} written $G_0^2-\sgn$, is the intersection of the anti-symmetric part of $G\times G$ with the interior of $G\times G.$
\end{definition}

\begin{theorem} \label{T:SAS}
If $\sgn(p,p^{-1})=\sgn(p^{-1},p)$ for $p\in G$ then
\begin{equation*}
\twisteq{x}{y}=x_ey + xy_e + \sum_p \sgn(p,p^{-1})x_py_{p^{-1}} -2x_ey_e + \sum_{p,q}^{G_0^2+\sgn}\sgn(p,q)\left(\frac{x_py_q+x_qy_p}{2}\right)i_{pq}
\end{equation*}
 and 
\begin{equation*}
\twistne{x}{y} = \sum_{p,q}^{G_0^2-\sgn} \sgn(p,q)\left(\frac{x_py_q-x_qy_p}{2}\right)i_{pq}
\end{equation*}
\end{theorem}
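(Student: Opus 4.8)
The plan is to start from the definitions of $\twisteq{x}{y}$ and $\twistne{x}{y}$ (equivalently, from $\twisteq{x}{y}=\tfrac{xy+yx}{2}$ and $\twistne{x}{y}=\tfrac{xy-yx}{2}$), first rewriting each product with a \emph{symmetrised} coefficient and then splitting the index set along the identity. Since the set $G^2+\sgn$ is invariant under the transposition $(p,q)\mapsto(q,p)$, since $\sgn$ is symmetric on it, and since $i_{pq}=i_{qp}$ ($G$ being abelian --- see below), the substitution $p\leftrightarrow q$ rewrites $\sum_{p,q}^{G^2+\sgn}\sgn(p,q)x_py_qi_{pq}$ as $\sum_{p,q}^{G^2+\sgn}\sgn(p,q)x_qy_pi_{pq}$; averaging the two forms gives
\[
 \twisteq{x}{y}=\sum_{p,q}^{G^2+\sgn}\sgn(p,q)\left(\frac{x_py_q+x_qy_p}{2}\right)i_{pq}.
\]
The identical manoeuvre applied to $\twistne{x}{y}$, this time using $\sgn(q,p)=-\sgn(p,q)$ on $G^2-\sgn$, yields
\[
 \twistne{x}{y}=\sum_{p,q}^{G^2-\sgn}\sgn(p,q)\left(\frac{x_py_q-x_qy_p}{2}\right)i_{pq}.
\]

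Next I would partition $G\times G$ as the disjoint union of the interior $G_0^2$, the ``edges'' $\{(e,q)\mid q\in G\}$ and $\{(p,e)\mid p\ne e\}$, and the anti-diagonal $\{(p,p^{-1})\mid p\ne e\}$, the pair $(e,e)$ being counted among the edges. Using the normalisation $\sgn(e,\cdot)=\sgn(\cdot,e)=1$ (in particular $\sgn(e,e)=1$), every edge lies in $G^2+\sgn$; by the hypothesis $\sgn(p,p^{-1})=\sgn(p^{-1},p)$, so does every anti-diagonal pair. Hence $G^2+\sgn$ is the disjoint union of the edges, the anti-diagonal, and $G_0^2+\sgn$, while $G^2-\sgn$ contains no boundary pair at all, i.e.\ $G^2-\sgn=G_0^2-\sgn$.

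For $\twistne{x}{y}$ the result is now immediate: inserting $G^2-\sgn=G_0^2-\sgn$ into the second display is exactly the asserted identity. For $\twisteq{x}{y}$ I would split the first display according to the partition of $G^2+\sgn$. The part over $G_0^2+\sgn$ is verbatim the last term of the claim. The part over the edges, after inserting $\sgn(e,\cdot)=\sgn(\cdot,e)=1$ and pairing $(e,q)$ with $(q,e)$, collapses to $x_ey+xy_e-x_ey_e$, the single $-x_ey_e$ coming from the edge pair $(e,e)$. The part over the anti-diagonal, after pairing $(p,p^{-1})$ with $(p^{-1},p)$, collapses to $\sum_p\sgn(p,p^{-1})x_py_{p^{-1}}-x_ey_e$, the subtracted term being the $p=e$ summand $\sgn(e,e)x_ey_e$ present in the unrestricted sum over all $p$ but absent from the sum over $p\ne e$. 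Adding the three parts produces exactly $x_ey+xy_e+\sum_p\sgn(p,p^{-1})x_py_{p^{-1}}-2x_ey_e$ together with the interior term, which is the theorem.

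Apart from the symmetrisations in the first step, everything is routine reindexing; the one delicate point is the bookkeeping at the identity. The pair $(e,e)$ sits on every boundary face simultaneously and also occurs in the unrestricted sum $\sum_p\sgn(p,p^{-1})x_py_{p^{-1}}$, so the coefficient of $i_e$ must be assembled with care to land on exactly $-2x_ey_e$ rather than $-x_ey_e$ or $-3x_ey_e$ --- that is the heart of the proof. Finally, it is worth noting explicitly that the first step relies on $i_{pq}=i_{qp}$, i.e.\ commutativity of $G$; this is tacit throughout the section and is already needed for $\twisteq{x}{y}=\twisteq{y}{x}$.
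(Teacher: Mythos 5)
Your proof is correct and follows essentially the same route as the paper's: split the index set along the boundary of $G\times G$ (the edges $p=e$, $q=e$ and the anti-diagonal $pq=e$), observe that the normalisation $\sgn(e,\cdot)=\sgn(\cdot,e)=1$ together with the invertive hypothesis forces the entire boundary into the symmetric part so that $G^2-\sgn=G_0^2-\sgn$, and then assemble the boundary terms of $\twisteq{x}{y}$; your version merely makes explicit the symmetrisation and the bookkeeping at $(e,e)$ that the paper's one-line proof leaves implicit. Your closing remark that the argument uses $i_{pq}=i_{qp}$, i.e.\ commutativity of $G$, is accurate --- that assumption is tacit throughout the section (it is already needed for $\twisteq{x}{y}=\twisteq{y}{x}$ and for $xy+yx=2\,\twisteq{x}{y}$) and is satisfied in the paper's intended application $G=\Z{N}$.
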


\begin{proof}
Separating out the terms of $\twisteq{x}{y}$ for which $p=e$, $q=e$, $p=q^{-1}$ and $e\ne p\ne q^{-1}\ne e$ yields

\begin{equation*}\label{E:twisteq} 
\twisteq{x}{y} =x_ey + xy_e + \sum_p \sgn(p,p^{-1})x_py_{p^{-1}} -2x_ey_e 
+ \sum_{p,q}^{G_0^2+\sgn}\sgn(p,q)\left(\frac{x_py_q+x_qy_p}{2}\right)i_{pq} 
\end{equation*}

The set of all $(p,q)\in G\times G$ for which $\sgn(p,q)\ne\sgn(q,p)$ is precisely the anti-symmetric interior of $G\times G$, since $\sgn(e,q)=\sgn(p,e)=1$ and $\sgn(p,p^{-1})=\sgn(p^{-1},p).$ Thus

\begin{equation*} \label{E:tne2} 
\twistne{x}{y} = \sum_{p,q}^{G_0^2-\sgn} \sgn(p,q)\left(\frac{x_py_q-x_qy_p}{2}\right)i_{pq} 
\end{equation*}
 \end{proof}

 \section{Twists and Ring Properties}

 Let $G$ denote a finite group with identity $e$, $\mathbb{K}$ a ring and $\sgn$ a twist on $G$. Let $V=\tga{G}{\sgn}{\mathbb{K}}.$

In order for $i_e$ to be the identity element 1 of the group algebra $V$, we require for all $p\in G$

\begin{equation}
 \sgn(e,p)=\sgn(p,e)=1
\end{equation}

 \begin{definition}
 If $\sgn(p,q)=\sgn(q,p)$ for all $p,q\in G$ , then $\sgn$ is a \emph{commutative} twist.
 \end{definition}
 \begin{theorem}
 If $V=\tga{G}{\sgn}{\mathbb{K}}$ and $\sgn$ is commutative, then the resulting product on $V$ is commutative.
 \end{theorem}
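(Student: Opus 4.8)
The cleanest route, and the one I would take, is to read the claim straight off the symmetric/anti-symmetric decomposition already recorded for $V=\tga{G}{\sgn}{\mathbb{K}}$. The first step is to observe that saying $\sgn$ is commutative is exactly saying that the anti-symmetric part $G^2-\sgn$ is empty: by definition it is the set of pairs $(p,q)$ with $\sgn(p,q)\ne\sgn(q,p)$, and a commutative twist admits no such pair. Consequently, for every $x,y\in V$ the anti-symmetric product $\twistne{x}{y}=\sum_{p,q}^{G^2-\sgn}\sgn(p,q)x_py_qi_{pq}$ is a sum over the empty index set, so $\twistne{x}{y}=0$.

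With that in hand the proof is a short chain through the decomposition identities established earlier. From $xy=\twisteq{x}{y}+\twistne{x}{y}$ and $\twistne{x}{y}=0$ we get $xy=\twisteq{x}{y}$, and symmetrically $yx=\twisteq{y}{x}$; since the symmetric product already satisfies $\twisteq{x}{y}=\twisteq{y}{x}$, we conclude $xy=\twisteq{x}{y}=\twisteq{y}{x}=yx$ for all $x,y\in V$, which is the assertion.

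A more hands-on alternative is to expand $yx=\sum_{p,q}\sgn(p,q)y_px_qi_{pq}$, rename the summation variables $p\leftrightarrow q$, and use $\sgn(q,p)=\sgn(p,q)$ to recover $\sum_{p,q}\sgn(p,q)x_py_qi_{pq}=xy$. I do not expect a genuine obstacle in either version; the one point deserving care is that this re-indexing tacitly relies on $i_{qp}=i_{pq}$ and on commutativity of scalar multiplication in $\mathbb{K}$, whereas the first route hides all of that inside the previously granted identities $xy=\twisteq{x}{y}+\twistne{x}{y}$ and $\twisteq{x}{y}=\twisteq{y}{x}$ --- which is why I would present that one.
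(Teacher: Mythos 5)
Your proposal is correct, but your preferred route is not the one the paper takes: the paper's proof is precisely your ``more hands-on alternative.'' It writes $i_pi_q=\sgn(p,q)i_{pq}=\sgn(q,p)i_{qp}$ and then re-indexes $xy=\Sum{p,q}\sgn(p,q)x_py_qi_{pq}=\Sum{q,p}\sgn(q,p)x_qy_pi_{qp}=yx$, i.e.\ exactly the swap-the-dummy-variables computation you describe, done in one line. Your primary route --- note that commutativity of $\sgn$ makes $G^2-\sgn$ empty, hence $\twistne{x}{y}=0$, hence $xy=\twisteq{x}{y}=\twisteq{y}{x}=yx$ --- is a legitimate and arguably cleaner derivation given that Section~3 precedes this theorem, and it has the virtue of explaining \emph{why} the result holds (the entire product is its symmetric part). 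What it buys in conceptual clarity it pays for in logical dependence: it leans on the identities $xy=\twisteq{x}{y}+\twistne{x}{y}$ and $\twisteq{x}{y}=\twisteq{y}{x}$, and the latter is proved (implicitly) by the very same re-indexing. You are right to flag that the re-indexing tacitly uses $i_{pq}=i_{qp}$: both your route and the paper's inherit this unstated assumption that $G$ is abelian (for a non-abelian $G$ with, say, the trivial twist, the conclusion fails), so neither argument is more careful than the other on that point --- but that is a gap in the paper's statement, not in your proof.
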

\begin{proof}
For $p,q\in G$, $i_pi_q=\sgn(p,q)i_{pq}=\sgn(q,p)i_{qp}$. Thus, for $x,y\in V$, $xy
= \Sum{p,q} x_p y_q \sgn(p,q) i_{pq} = \Sum{q,p} x_q y_p \sgn(q,p) i_{qp}=yx$
\end{proof}

 Corollary \ref{C:assoc} establishes that a twisted group algebra is isomorphic to to a ring of square matrices provided that a particular condition on the twist is satisfied, motivating the following definition.
 \begin{definition}\label{D:associative}
  If $\sgn(p,q)\sgn(pq,r)=\sgn(p,qr)\sgn(q,r)$ for $p,q,r\in G,$ then $\sgn$ is an \emph{associative} twist on $G.$
 \end{definition}
 \begin{theorem}\
  If $p,q,r\in G$ and $\sgn$ is associative, then $i_p \left(i_qi_r\right)=\left(i_p i_q\right)i_r.$
  \end{theorem}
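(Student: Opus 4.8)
The plan is to expand both $i_p(i_qi_r)$ and $(i_pi_q)i_r$ directly from Definition~1 of the product, namely $i_pi_q=\sgn(p,q)i_{pq}$, using the scalar relation $(cx)y=x(cy)=c(xy)$ noted just after the extension of the product to $V$ in order to pull the sign factors out front.

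First I would compute the right-hand side: since $i_pi_q=\sgn(p,q)i_{pq}$, we get
\[
(i_pi_q)i_r=\bigl(\sgn(p,q)i_{pq}\bigr)i_r=\sgn(p,q)\bigl(i_{pq}i_r\bigr)=\sgn(p,q)\sgn(pq,r)i_{(pq)r}.
\]
Next I would treat the left-hand side the same way: since $i_qi_r=\sgn(q,r)i_{qr}$, we get
\[
i_p(i_qi_r)=i_p\bigl(\sgn(q,r)i_{qr}\bigr)=\sgn(q,r)\bigl(i_pi_{qr}\bigr)=\sgn(q,r)\sgn(p,qr)i_{p(qr)}.
\]

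Finally I would compare the two results. Because $G$ is a group, multiplication in $G$ is associative, so $(pq)r=p(qr)$ and hence $i_{(pq)r}$ and $i_{p(qr)}$ are literally the same basis vector. Because $\sgn$ is an associative twist (Definition~\ref{D:associative}), $\sgn(p,q)\sgn(pq,r)=\sgn(p,qr)\sgn(q,r)$, so the scalar coefficients agree as well. Therefore $(i_pi_q)i_r=i_p(i_qi_r)$.

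There is essentially no obstacle here: the statement is a reformulation of Definition~\ref{D:associative} once the products are expanded, and the only thing to be careful about is tracking which group elements the sign function is being evaluated at. (Alternatively one could derive it from Corollary~\ref{C:assoc} together with Theorem~\ref{T:stMat}, but the direct computation is shorter and self-contained.)
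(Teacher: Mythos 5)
Your proposal is correct and is essentially the same argument as the paper's: both expand the two triple products via $i_pi_q=\sgn(p,q)i_{pq}$ and invoke the associative-twist identity $\sgn(p,q)\sgn(pq,r)=\sgn(p,qr)\sgn(q,r)$ to match the scalar factors. The paper merely writes it as one chain of equalities from $i_p(i_qi_r)$ to $(i_pi_q)i_r$ rather than computing the two sides separately.
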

\begin{proof}
  \begin{align*}
 i_p\left(i_qi_r\right)&=i_p\left(\sgn(q,r)i_{qr}\right)\\
                 &=\sgn(q,r)i_pi_{qr}=\sgn(p,qr)\sgn(q,r)i_{p(qr)}\\
                 &=\sgn(p,q)\sgn(pq,r)i_{(pq)r}\\
                 &=\sgn(p,q)i_{pq}i_r\\
                 &=\left(i_pi_q\right)i_r.
  \end{align*}
\end{proof}
\begin{theorem}\label{T:associative}
 If $x,y,z\in V=\tga{G}{\sgn}{\mathbb{K}}$ and if $\sgn$ is associative, then
 $x\left(yz\right)=\left(xy\right)z.$
\end{theorem}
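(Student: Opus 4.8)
The plan is to bootstrap from the basis-vector case established in the preceding theorem. The key point is that the distributive laws and the scalar identities $(cx)y = x(cy) = c(xy)$ hold in \emph{every} twisted group algebra — they were obtained ``for free'' from the definition of the product — and neither depends on the twist being associative, so both are available here.

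First I would expand all three arguments in the basis $\basis{B}$: write $x = \Sum{p} x_p i_p$, $y = \Sum{q} y_q i_q$, $z = \Sum{r} z_r i_r$. Applying distributivity to $yz$ and then again to $x(yz)$, and using $(cx)y = x(cy) = c(xy)$ repeatedly to move the ring coefficients outside each product, gives
\[ x(yz) = \Sum{p,q,r} x_p y_q z_r\, i_p\!\left(i_q i_r\right). \]
The identical manipulation carried out in the other grouping yields
\[ (xy)z = \Sum{p,q,r} x_p y_q z_r\, \left(i_p i_q\right) i_r. \]
By the preceding theorem, $i_p(i_q i_r) = (i_p i_q) i_r$ for all $p,q,r \in G$ whenever $\sgn$ is associative, so the two triple sums coincide term by term, and $x(yz) = (xy)z$ follows.

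The only place where care is genuinely needed is the regrouping in the middle step: since we do not yet know $V$ is associative, the passage to the triple sum must be justified entirely by the module axioms over $\mathbb{K}$ together with the mixed-scalar identity $(cx)y = x(cy) = c(xy)$, and never by reassociating a product inside $V$. If $\mathbb{K}$ is noncommutative one simply keeps the coefficients in the fixed order $x_p y_q z_r$ throughout, so that the comparison of the two sides comes down to associativity of multiplication in $\mathbb{K}$, i.e.\ $x_p(y_q z_r) = (x_p y_q) z_r$. Beyond this bookkeeping I do not expect any real obstacle; the content of the theorem is entirely contained in the basis-vector statement already proved.
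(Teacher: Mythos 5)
Your proposal is correct and follows essentially the same route as the paper: expand $x$, $y$, $z$ in the basis, reduce to a triple sum $\Sum{p,q,r}x_py_qz_r\,i_p(i_qi_r)$, and invoke the basis-vector identity $i_p(i_qi_r)=(i_pi_q)i_r$ together with associativity of $\mathbb{K}$ for the coefficients. Your extra care about justifying the regrouping purely from distributivity and the identity $(cx)y=x(cy)=c(xy)$ is a sensible sharpening of a step the paper leaves implicit.
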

\begin{proof}
 $y z=\Sum{q,r}\left(y_qz_r\right)i_qi_r,$ so
  \begin{align*}
 x(yz)&=
 \left(\Sum{p}x_pi_p\right)\left(\Sum{q,r}\left(y_qz_r\right)i_qi_r\right)\\
 &=\Sum{p,q,r}x_p\left(y_qz_r\right)i_p\left(i_qi_r\right)\\
 &=\Sum{p,q,r}\left(x_py_q\right)z_r\left(i_pi_q\right)i_r\\
 &=\left(\Sum{p,q}\left(x_py_q\right)i_pi_q\right)\Sum{r}z_ri_r\\
 &=\left(xy\right)z
  \end{align*}.
\end{proof}
 
 \begin{corollary}
 If $\sgn$ is associative, then $<\tga{G}{\sgn}{\mathbb{K}},+,\cdot>$ is a ring with unity.
 \end{corollary}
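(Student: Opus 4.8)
The plan is to verify directly the defining axioms of a ring with unity, assembling pieces that have already been put in place. First I would observe that $(\tga{G}{\sgn}{\mathbb{K}},+)$ is an abelian group for free: $V$ is by hypothesis an inner product space, in particular a module, over $\mathbb{K}$, so its addition is associative and commutative, admits the zero vector as identity, and has additive inverses. Nothing about the twist is needed for this part.

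Next I would dispatch the two distributive laws. These were already noted to hold ``for free'' immediately after the definition of the product: since $xy=\Sum{p,q}\sgn(p,q)x_py_qi_{pq}$ is additive in each of $x$ and $y$ separately (using the left and right distributive laws of $\mathbb{K}$ applied to the coefficients $x_p,y_q$), one obtains $x(y+z)=xy+xz$ and $(x+y)z=xz+yz$ at once. Associativity of multiplication, $(xy)z=x(yz)$, is precisely Theorem~\ref{T:associative}, and this is the step that consumes the hypothesis that $\sgn$ is associative.

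Finally I would identify the unity. The natural candidate is $i_e$, and I would check it is a two-sided identity: for $x=\Sum{p}x_pi_p$, the definition of the product gives $i_ex=\Sum{p}\sgn(e,p)x_pi_{ep}$ and $xi_e=\Sum{p}\sgn(p,e)x_pi_{pe}$, so the normalization $\sgn(e,p)=\sgn(p,e)=1$ together with $ep=pe=p$ collapses both sums back to $x$. Combining these observations (abelian additive group, biadditive and associative multiplication, the two distributive laws, and the two-sided unity $i_e$) yields the corollary.

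I do not expect a genuine obstacle: the substantive content was front-loaded into Theorem~\ref{T:associative} and into the normalization of $\sgn$ on the identity, so the proof is essentially an assembly. The one point that warrants a moment's care is making sure the appeal to ``$V$ is a $\mathbb{K}$-module, hence an abelian group under $+$'' is legitimate, i.e. that the ambient $V$ genuinely carries the full additive structure rather than being treated merely as a set equipped with a product; this is guaranteed by the standing hypothesis that $V$ is an inner product space over $\mathbb{K}$.
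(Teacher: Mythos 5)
Your proof is correct and is exactly the assembly the paper intends: the corollary is stated without proof, relying on the distributive laws noted ``for free'' after the product definition, the normalization $\sgn(e,p)=\sgn(p,e)=1$ imposed at the start of the section so that $i_e$ is the unity, and Theorem~\ref{T:associative} for associativity. Your explicit verification of each ring axiom, including the additive group structure inherited from the $\mathbb{K}$-module $V$, fills in the same steps the paper leaves implicit.
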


\begin{theorem}\label{T:lr_inverse}
  For each $p\in G,$ $\sgn\left(p,p^{-1}\right)i_{p^{-1}}$ and $\sgn\left(p^{-1},p\right)i_{p^{-1}}$
  are right and left inverses, respectively, of $i_p.$
 \end{theorem}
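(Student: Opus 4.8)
The plan is to verify each claim by a direct one-line computation using Definition~1 (the product $i_pi_q=\sgn(p,q)i_{pq}$), the scalar-pullout property $(cx)y=c(xy)$, and two elementary facts already in hand: first, that $\sgn$ takes values in $\{-1,1\}$, so $\sgn(p,p^{-1})^2=\sgn(p^{-1},p)^2=1$; and second, that $i_e$ is the identity $1$ of $V$, which holds because of the normalization $\sgn(e,p)=\sgn(p,e)=1$ imposed at the start of this section.

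First I would handle the right inverse. Compute
\[
  i_p\bigl(\sgn(p,p^{-1})\,i_{p^{-1}}\bigr)
   =\sgn(p,p^{-1})\,i_p i_{p^{-1}}
   =\sgn(p,p^{-1})\,\sgn(p,p^{-1})\,i_{pp^{-1}}
   =\sgn(p,p^{-1})^2\,i_e
   =i_e=1.
\]
Then the left inverse is symmetric:
\[
  \bigl(\sgn(p^{-1},p)\,i_{p^{-1}}\bigr)i_p
   =\sgn(p^{-1},p)\,i_{p^{-1}}i_p
   =\sgn(p^{-1},p)\,\sgn(p^{-1},p)\,i_{p^{-1}p}
   =\sgn(p^{-1},p)^2\,i_e
   =i_e=1.
\]
This establishes both assertions.

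There is essentially no obstacle here: the entire content is that the twist is $\{-1,1\}$-valued (so it squares to $1$) and that $p p^{-1}=p^{-1}p=e$ with $i_e=1$. The only point worth flagging is that, in the absence of an associativity hypothesis on $\sgn$, one cannot yet conclude that these one-sided inverses coincide or that $i_p$ has a genuine two-sided inverse; that is exactly why the theorem is phrased in terms of separate right and left inverses, and no more should be claimed at this stage.
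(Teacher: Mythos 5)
Your proof is correct and follows essentially the same direct computation as the paper's: pull out the scalar, apply $i_pi_{p^{-1}}=\sgn(p,p^{-1})i_{pp^{-1}}$ (resp.\ $i_{p^{-1}}i_p=\sgn(p^{-1},p)i_{p^{-1}p}$), and use that the twist squares to $1$ and $i_e=1$. Your closing remark about not yet having a two-sided inverse is a correct observation and is exactly what the paper addresses next with the invertive property.
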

 \begin{proof}
  \begin{align*}
  i_p\left(\sgn\left(p,p^{-1}\right)i_{p^{-1}}\right)
    &=\sgn\left(p,p^{-1}\right)i_pi_{p^{-1}}\\
    &=\sgn\left(p,p^{-1}\right)\left(\sgn\left(p,p^{-1}\right)i_{pp^{-1}}\right)\\
    &=i_e=1\\
  \left(\sgn\left(p^{-1},p\right)i_{p^{-1}}\right)i_p
    &=\sgn\left(p^{-1},p\right)i_{p^{-1}}i_p\\
    &=\sgn\left(p^{-1},p\right)\left(\sgn\left(p^{-1},p\right)i_{pp^{-1}}\right)\\
    &=i_e=1
  \end{align*}
 \end{proof}
 \begin{definition}\label{D:invertive}
  If $\sgn\left(p,p^{-1}\right)=\sgn\left(p^{-1},p\right)$ for $p\in G,$ then $\sgn$ is an \emph{invertive} twist on $G.$
   \end{definition}
 \begin{theorem}
  If $p\in G$ and if $\sgn$ is invertive, then $i_p$ has an inverse \\
  $i_p^{-1}=\sgn\left(p,p^{-1}\right)i_{p^{-1}}=\sgn\left(p^{-1},p\right)i_{p^{-1}}.$
 \end{theorem}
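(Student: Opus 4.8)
The plan is to derive this directly from Theorem~\ref{T:lr_inverse} together with Definition~\ref{D:invertive}, since the work has essentially already been done. Theorem~\ref{T:lr_inverse} exhibits $\sgn(p,p^{-1})i_{p^{-1}}$ as a right inverse of $i_p$ and $\sgn(p^{-1},p)i_{p^{-1}}$ as a left inverse of $i_p$; the only new ingredient needed is that, when $\sgn$ is invertive, these two candidate inverses are literally the same vector.

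First I would invoke Definition~\ref{D:invertive}: an invertive twist satisfies $\sgn(p,p^{-1})=\sgn(p^{-1},p)$ for every $p\in G$. Hence the scalars multiplying $i_{p^{-1}}$ in the two expressions from Theorem~\ref{T:lr_inverse} agree, and so $\sgn(p,p^{-1})i_{p^{-1}}=\sgn(p^{-1},p)i_{p^{-1}}$. Denote this common element by $i_p^{-1}$.

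Next I would simply re-read the two computations in the proof of Theorem~\ref{T:lr_inverse} with this identification in place: the first chain of equalities gives $i_p\,i_p^{-1}=i_p\bigl(\sgn(p,p^{-1})i_{p^{-1}}\bigr)=i_e=1$, and the second gives $i_p^{-1}\,i_p=\bigl(\sgn(p^{-1},p)i_{p^{-1}}\bigr)i_p=i_e=1$. Thus $i_p^{-1}$ is a two-sided inverse of $i_p$, which is exactly the assertion.

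I expect no real obstacle here; the statement is a corollary of the previous theorem once invertivity collapses the left and right inverses into one. The only subtlety worth a remark is that "the" inverse is literally "an" inverse unless one also knows the algebra is associative (or at least alternative), so uniqueness should not be claimed beyond what associativity of the twist would later provide; for the present statement, establishing the single two-sided inverse $i_p^{-1}=\sgn(p,p^{-1})i_{p^{-1}}=\sgn(p^{-1},p)i_{p^{-1}}$ suffices.
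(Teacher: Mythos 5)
Your argument is exactly the paper's: the paper's proof reads ``Follows immediately from Theorem~\ref{T:lr_inverse} and Definition~\ref{D:invertive},'' and you have simply spelled out that immediacy — the invertive condition makes the left and right inverses from Theorem~\ref{T:lr_inverse} coincide, yielding a single two-sided inverse. Your closing remark about not claiming uniqueness absent associativity is a sensible caveat the paper leaves implicit.
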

 \begin{proof}
  Follows immediately from Theorem \ref{T:lr_inverse} and Definition \ref{D:invertive}.
 \end{proof}
 \begin{definition}\label{D:conjugate}
  If $\sgn$ is invertive, and $x\in V=\tga{G}{\sgn}{\mathbb{K}},$ then let $\conj{x}=\Sum{p}\conj{x}_pi_p^{-1}$ denote
  the \emph{conjugate} of $x.$
 \end{definition}
 \begin{theorem}\label{T:conjugate}
  If $\sgn$ is an invertive twist on $G,$ and if $x,y\in V=\tga{G}{\sgn}{\mathbb{K}},$ then
  \begin{enumerate}
   \item[(i)] $\conj{x}=\Sum{p}\sgn\left(p^{-1},p\right)\conj{x}_{p^{-1}}i_p$
   \item[(ii)] $\conj{\conj{x}}=x$
   \item[(iii)] $\conj{(x+y)}=\conj{x}+\conj{y}$
   \item[(iv)] $\conj{(cx)}=\conj{c}\conj{x}$ for all $c\in \mathbb{K}.$
  \end{enumerate}
 \end{theorem}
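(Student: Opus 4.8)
The plan is to establish part~(i) first and then harvest (ii)--(iv) from it with essentially no extra work. For (i), I would begin with Definition~\ref{D:conjugate}, $\conj{x}=\Sum{p}\conj{x}_p i_p^{-1}$, and use Theorem~\ref{T:lr_inverse} together with Definition~\ref{D:invertive} to replace $i_p^{-1}$ by $\sgn(p,p^{-1})i_{p^{-1}}$, giving $\conj{x}=\Sum{p}\sgn(p,p^{-1})\conj{x}_p i_{p^{-1}}$. Re-indexing the sum by $q=p^{-1}$ turns $\sgn(p,p^{-1})$ into $\sgn(q^{-1},q)$ and $\conj{x}_p$ into $\conj{x}_{q^{-1}}$, which is exactly the claimed formula once $q$ is renamed. (Invertivity could be invoked a second time to swap $\sgn(q^{-1},q)$ for $\sgn(q,q^{-1})$, but that is not needed.)

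For (ii), formula (i) exhibits the $i_p$-coefficient of $\conj{x}$ as $(\conj{x})_p=\sgn(p^{-1},p)\conj{x}_{p^{-1}}$. Applying Definition~\ref{D:conjugate} a second time and again writing $i_p^{-1}=\sgn(p^{-1},p)i_{p^{-1}}$ gives $\conj{\conj{x}}=\Sum{p}\conj{\bigl(\sgn(p^{-1},p)\conj{x}_{p^{-1}}\bigr)}\,\sgn(p^{-1},p)\,i_{p^{-1}}$. Since each value $\sgn(p^{-1},p)$ is $\pm1$, it is fixed by the involution on $\mathbb{K}$ and is central, and since that involution is an involution we have $\conj{\bigl(\conj{x}_{p^{-1}}\bigr)}=x_{p^{-1}}$; combined with $\sgn(p^{-1},p)^2=1$ the whole expression collapses to $\Sum{p}x_{p^{-1}}i_{p^{-1}}$, which is $x$ after the substitution $q=p^{-1}$. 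For (iii) and (iv) I would simply note that coefficient extraction $x\mapsto x_p$ is additive and $\mathbb{K}$-homogeneous, so $(x+y)_p=x_p+y_p$ and $(cx)_p=cx_p$, and that conjugation on $\mathbb{K}$ is additive with $\conj{(cx_p)}=\conj{c}\,\conj{x_p}$ (scalars being treated as central, as elsewhere in the paper); substituting into Definition~\ref{D:conjugate} and collecting terms yields the two identities at once.

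I do not expect a genuine obstacle here: the entire content is bookkeeping around the re-indexing $q=p^{-1}$ in (i) and (ii) and keeping straight which of the two equal expressions $\sgn(p,p^{-1})$ or $\sgn(p^{-1},p)$ is in play at each stage. The one point worth stating explicitly rather than passing over is that the twist takes values in $\{-1,1\}$, so these sign factors are central and fixed by the ring involution, which is what lets the double conjugation in (ii) telescope and what makes the order of scalar multiplication immaterial in (iv).
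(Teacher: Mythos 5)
Your proposal is correct and follows essentially the same route as the paper: part (i) by substituting $i_p^{-1}=\sgn(p,p^{-1})i_{p^{-1}}$ and re-indexing by $p\mapsto p^{-1}$, part (ii) by reading off the coefficients of $\conj{x}$ from (i) and letting the two sign factors cancel, and parts (iii) and (iv) by coefficient-wise additivity and homogeneity. The one cosmetic difference is that you cancel via $\sgn(p^{-1},p)^2=1$ where the paper writes $\sgn(p^{-1},p)\sgn(p,p^{-1})=1$, but under invertivity these are the same observation.
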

  \begin{proof}
   \begin{enumerate}
    \item[]
    \item[(i)] Let $q^{-1}=p.$ Then 
     \begin{align*}
       \conj{x}&=\Sum{q}\conj{x}_qi_q^{-1}\\
                &=\Sum{q}\conj{x}_q\sgn\left(q,q^{-1}\right)i_{q^{-1}}\\
                &=\Sum{p}\sgn\left(p^{-1},p\right)\conj{x}_{p^{-1}}i_p
     \end{align*}
    \item[(ii)] Let $z=\conj{x}.$ Then
     \begin{align*} z&=\Sum{p}\sgn\left(p^{-1},p\right)\conj{x}_{p^{-1}}i_p\\
                           &=\Sum{p}z_pi_p
       \end{align*}
     where $z_p=\sgn\left(p^{-1},p\right)\conj{x}_{p^{-1}}.$ Then $z_{p^{-1}}=\sgn\left(p,p^{-1}\right)\conj{x}_p,$ and
     \begin{align*}
     \conj{z}_{p^{-1}}&=\sgn\left(p,p^{-1}\right)x_p\text{.\ So}\\ 
     \conj{\conj{x}}&=\conj{z}\\
              &=\Sum{p}\sgn\left(p^{-1},p\right)\conj{z}_{p^{-1}}i_p\\
              &=\Sum{p}\sgn\left(p^{-1},p\right)\sgn\left(p,p^{-1}\right)x_pi_p \\
              &=\Sum{p}x_pi_p=x
       \end{align*}
    \item[(iii)] 
     \begin{align*}
        \conj{\left(x+y\right)}&=\Sum{p}\conj{\left(x_p+y_p\right)}i_p^{-1}\\
                                    &=\Sum{p}\left(\conj{x}_p+\conj{y}_p\right)i_p^{-1}\\
                                    &=\Sum{p}\conj{x}_pi_p^{-1} +\Sum{p}\conj{y}_pi_p^{-1}\\
                                    &=\conj{x}+\conj{y}.
       \end{align*}
    \item[(iv)] 
     \begin{align*}
        \conj{(cx)}&=\Sum{p}\conj{(cx_p)}i_p^{-1}\\
                  &=\Sum{p}\conj{c}\conj{x}_pi_p^{-1}\\
                  &=\conj{c}\Sum{p}\conj{x}_pi_p^{-1}\\
                  &=\conj{c}\conj{x}
       \end{align*}
   \end{enumerate}
  \end{proof}
  The invertive property is not sufficient for establishing the algebraic property 
$\conj{(xy)}=\conj{y}\conj{x}$. For that, we need the \emph{propreity} property.
\section{Proper Twists}
  \begin{definition}\label{D:proper}
   The statement that the twist $\sgn$ on $G$ is  \emph{proper} means that if $p,q\in G,$ then
   \begin{enumerate}
    \item[(1)] $\sgn(p,q)\sgn\left(q,q^{-1}\right)=\sgn\left(pq,q^{-1}\right)$
    \item[(2)] $\sgn\left(p^{-1},p\right)\sgn(p,q)=\sgn\left(p^{-1},pq\right).$
   \end{enumerate}
  \end{definition}
  \begin{theorem}\label{T:assocprod}
   Every associative twist is proper.
  \end{theorem}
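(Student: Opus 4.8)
The plan is to derive both defining identities (1) and (2) of Definition~\ref{D:proper} directly from the associativity relation of Definition~\ref{D:associative}, by specializing its third (or first) argument to an inverse, after first pinning down the values of $\sgn$ on the identity.

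First I would record the normalization $\sgn(e,p)=\sgn(p,e)=1$ for all $p\in G$. This is already in force for the twists under consideration; and in any case it is a consequence of associativity alone, since setting $q=r=e$ in $\sgn(p,q)\sgn(pq,r)=\sgn(p,qr)\sgn(q,r)$ gives $\sgn(p,e)^2=\sgn(p,e)\sgn(e,e)$ — with $p=e$ this forces $\sgn(e,e)=1$ and hence $\sgn(p,e)=1$ — while $p=q=e$ gives $\sgn(e,r)^2=\sgn(e,r)$, so $\sgn(e,r)=1$. I will also use repeatedly that $\sgn$ is $\{-1,1\}$-valued, so $\sgn(a,b)^2=1$ for every pair $(a,b)$.

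For identity (1), I would apply the associativity relation to the triple $(p,q,q^{-1})$, obtaining $\sgn(p,q)\,\sgn(pq,q^{-1})=\sgn(p,qq^{-1})\,\sgn(q,q^{-1})=\sgn(p,e)\,\sgn(q,q^{-1})=\sgn(q,q^{-1})$; multiplying through by $\sgn(p,q)$ and using $\sgn(p,q)^2=1$ then yields $\sgn(pq,q^{-1})=\sgn(p,q)\,\sgn(q,q^{-1})$, which is exactly (1). For identity (2), I would instead apply the associativity relation to the triple $(p^{-1},p,q)$, obtaining $\sgn(p^{-1},p)\,\sgn(p^{-1}p,q)=\sgn(p^{-1},pq)\,\sgn(p,q)$, i.e.\ $\sgn(p^{-1},p)=\sgn(p^{-1},pq)\,\sgn(p,q)$ after using $p^{-1}p=e$ and $\sgn(e,q)=1$; multiplying through by $\sgn(p,q)$ gives $\sgn(p^{-1},p)\,\sgn(p,q)=\sgn(p^{-1},pq)$, which is (2).

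There is essentially no obstacle: the argument is a pair of one-line substitutions into the associativity identity followed by multiplication by a sign. The only point that deserves attention is the identity normalization $\sgn(e,\cdot)=\sgn(\cdot,e)=1$, used twice above; as noted, it is either part of the standing hypotheses on twists or, failing that, an immediate corollary of associativity, so nothing is lost in invoking it.
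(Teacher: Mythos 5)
Your proof is correct and is essentially the paper's own argument: both substitute the triples $(p,q,q^{-1})$ and $(p^{-1},p,q)$ into the associativity identity and cancel using the normalization $\sgn(e,\cdot)=\sgn(\cdot,e)=1$. One caveat: your parenthetical claim that this normalization follows from associativity alone is false --- the specializations $q=r=e$ and $p=q=e$ only yield $\sgn(p,e)=\sgn(e,r)=\sgn(e,e)$ for all $p,r$, and the constant twist $\sgn\equiv -1$ is associative but not normalized (and indeed not proper, so the theorem genuinely needs the normalization). This does no harm here, since the paper imposes $\sgn(e,p)=\sgn(p,e)=1$ as a standing hypothesis, which is the first justification you offer.
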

  \begin{proof}\qquad\\
    \begin{enumerate}
     \item[(1)] $\sgn(p,q)\sgn\left(q,q^{-1}\right)
                 =\sgn\left(pq,q^{-1}\right)\sgn\left(p,qq^{-1}\right)\\
                 =\sgn\left(pq,q^{-1}\right)\sgn(p,e)=\sgn\left(pq,q^{-1}\right)\sgn(e,e)
                 =\sgn\left(pq,q^{-1}\right)$
     \item[(2)] $\sgn\left(p^{-1},p\right)\sgn(p,q)
                 =\sgn\left(p^{-1}p,q\right)\sgn\left(p^{-1},pq\right)\\
                 =\sgn(e,q)\sgn\left(p^{-1},pq\right)=\sgn(e,e)\sgn\left(p^{-1},pq\right)
                 =sgn\left(p^{-1},pq\right)$
     \end{enumerate}
  \end{proof}
  \begin{theorem}\label{T:conjid}
   Every proper twist is invertive.
  \end{theorem}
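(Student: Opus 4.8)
The key identity to exploit is part~(1) of Definition~\ref{D:proper}, and the plan is to specialize it at $q = p^{-1}$. First I would observe that $(p^{-1})^{-1} = p$ and $pp^{-1} = e$, so that the relation $\sgn(p,q)\,\sgn(q,q^{-1}) = \sgn(pq,q^{-1})$ collapses to $\sgn(p,p^{-1})\,\sgn(p^{-1},p) = \sgn(e,p)$.

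The next step is to dispose of the right-hand side by showing $\sgn(e,p) = 1$. This is the normalization required so that $i_e$ is the unit of the algebra, but for a self-contained argument I would instead derive it from properness: setting $p = e$ in part~(1) of Definition~\ref{D:proper} gives $\sgn(e,q)\,\sgn(q,q^{-1}) = \sgn(q,q^{-1})$, and cancelling the factor $\sgn(q,q^{-1}) \in \{-1,1\}$ leaves $\sgn(e,q) = 1$. With this in hand, the specialized identity reads $\sgn(p,p^{-1})\,\sgn(p^{-1},p) = 1$.

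Finally, since each of $\sgn(p,p^{-1})$ and $\sgn(p^{-1},p)$ lies in $\{-1,1\}$, a product of the two equal to $1$ can only occur when the two factors agree; hence $\sgn(p,p^{-1}) = \sgn(p^{-1},p)$ for all $p \in G$, which is exactly the condition in Definition~\ref{D:invertive}, so $\sgn$ is invertive. I do not expect any genuine obstacle: the whole proof amounts to one substitution followed by a parity observation. The only point that warrants care is the appeal to $\sgn(e,p) = 1$, which is why I would include its short derivation rather than merely cite it; as a consistency check, one can run the same argument through part~(2) of Definition~\ref{D:proper} at $q = p^{-1}$, obtaining $\sgn(p^{-1},p)\,\sgn(p,p^{-1}) = \sgn(p^{-1},e) = 1$ in an entirely parallel way.
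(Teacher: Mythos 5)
Your proof is correct and follows essentially the same route as the paper: specialize part (1) of Definition~\ref{D:proper} at $q=p^{-1}$ to get $\sgn(p,p^{-1})\sgn(p^{-1},p)=\sgn(e,p)=1$, then conclude equality of the two $\pm1$ factors. The one small addition is your derivation of $\sgn(e,q)=1$ from properness itself (via $p=e$ in part (1)), whereas the paper simply invokes the normalization $\sgn(e,p)=\sgn(p,e)=1$ imposed earlier so that $i_e$ is the unit; your version makes the proof slightly more self-contained but is otherwise the same argument.
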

  \begin{proof}
    Suppose $\sgn$ is proper.
    Then $\sgn\left(p,p^{-1}\right)\sgn\left(p^{-1},p\right)=\sgn\left(pp^{-1},p\right)=\sgn(e,p)=1,$ thus $\sgn\left(p,p^{-1}\right)=\sgn\left(p^{-1},p\right).$ So $\sgn$ is invertive.
  \end{proof}

  \begin{theorem}
  If $\sgn$ is a proper twist on $G,$ then
   $\conj{(i_pi_q)}=\conj{i}_q\conj{i}_p$ for all $p,q\in G.$
  \end{theorem}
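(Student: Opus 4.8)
The plan is to expand both sides into explicit scalar multiples of a single basis vector and then verify the resulting identity among values of $\sgn$ using only the two defining relations of Definition~\ref{D:proper}. Note first that, by Theorem~\ref{T:conjid}, a proper twist is invertive, so by Theorem~\ref{T:lr_inverse} we have $i_r^{-1}=\sgn(r,r^{-1})i_{r^{-1}}$ for every $r\in G$, and by Definition~\ref{D:conjugate} we have $\conj{i_r}=i_r^{-1}$.

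For the left side, $i_pi_q=\sgn(p,q)i_{pq}$ with $\sgn(p,q)=\pm1$, so Theorem~\ref{T:conjugate}(iv) and the facts above give
\[
\conj{(i_pi_q)}=\sgn(p,q)\conj{i_{pq}}=\sgn(p,q)\,\sgn(pq,q^{-1}p^{-1})\,i_{q^{-1}p^{-1}},
\]
using $(pq)^{-1}=q^{-1}p^{-1}$. For the right side,
\[
\conj{i}_q\conj{i}_p=i_q^{-1}i_p^{-1}=\sgn(q,q^{-1})\sgn(p,p^{-1})\,i_{q^{-1}}i_{p^{-1}}=\sgn(q,q^{-1})\sgn(p,p^{-1})\sgn(q^{-1},p^{-1})\,i_{q^{-1}p^{-1}}.
\]
Since both sides are scalar multiples of $i_{q^{-1}p^{-1}}$, the theorem is equivalent to the scalar identity
\[
\sgn(p,q)\,\sgn(pq,q^{-1}p^{-1})=\sgn(p,p^{-1})\,\sgn(q,q^{-1})\,\sgn(q^{-1},p^{-1}).
\]

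The remaining step, which is the only real content, is to prove this scalar identity; I would do so by chaining the proper relations, exploiting throughout that every value of $\sgn$ squares to $1$ so that factors may be moved across the equation freely. Specifically: relation (2) of Definition~\ref{D:proper}, applied with its two parameters taken to be $q$ and $q^{-1}p^{-1}$, together with invertivity, gives $\sgn(q^{-1},p^{-1})=\sgn(q,q^{-1})\,\sgn(q,q^{-1}p^{-1})$; relation (1), applied with parameters $q$ and $q^{-1}p^{-1}$, gives $\sgn(q,q^{-1}p^{-1})=\sgn(p^{-1},pq)\,\sgn(pq,q^{-1}p^{-1})$; and relation (2) in its original form gives $\sgn(p^{-1},pq)=\sgn(p,p^{-1})\,\sgn(p,q)$. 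Substituting back one obtains $\sgn(q^{-1},p^{-1})=\sgn(p,p^{-1})\,\sgn(q,q^{-1})\,\sgn(p,q)\,\sgn(pq,q^{-1}p^{-1})$, and multiplying both sides by $\sgn(p,p^{-1})\,\sgn(q,q^{-1})$ yields exactly the displayed scalar identity. I expect the main obstacle to be purely organizational---choosing which substitution to feed into each proper relation so that the chain closes up---rather than anything conceptually hard; once the substitutions are fixed, the proof is mechanical cancellation of signs.
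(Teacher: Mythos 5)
Your proof is correct and follows essentially the same route as the paper: both sides are reduced to scalar multiples of $i_{(pq)^{-1}}$ and the resulting identity among values of $\sgn$ is verified by chaining the two proper-twist relations together with invertivity. The only blemish is a labeling slip: your second step is really relation (1) applied with parameters $p^{-1}$ and $pq$ (not $q$ and $q^{-1}p^{-1}$), though the identity you extract, $\sgn(q,q^{-1}p^{-1})=\sgn(p^{-1},pq)\,\sgn(pq,q^{-1}p^{-1})$, is correct.
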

  \begin{proof}
 Since $i_pi_q=\sgn(p,q)i_{pq},$ $\conj{\left(i_pi_q\right)}
   =\conj{\left(\sgn(p,q)i_{pq}\right)}=\sgn(p,q)\conj{\left(i_{pq}\right)}\\
   =\sgn(p,q)i_{pq}^{-1} =\sgn(p,q)\sgn\left(\left(pq\right)^{-1},pq\right)i_{(pq)^{-1}}.$\\
   On the other hand,  
   \begin{align*}
   \conj{i}_q\conj{i}_p=i_q^{-1}i_p^{-1}&=
   \left(\sgn\left(q^{-1},q\right)i_{q^{-1}}\right)\left(\sgn\left(p^{-1},p\right)i_{p^{-1}}\right)\\
   &=\sgn\left(q^{-1},q\right)\sgn\left(p^{-1},p\right)\sgn\left(q^{-1},p^{-1}\right)i_{q^{-1}p^{-1}}\\
   &=\sgn\left(q^{-1},q\right)\sgn\left(p^{-1},p\right)\sgn\left(q^{-1},p^{-1}\right)i_{(pq)^{-1}}
   \end{align*}
   Therefore, in order to show that $\conj{(i_pi_q)}
   =\conj{i}_q\conj{i}_p,$ it is sufficient to show that\\
   $\sgn(p,q)\sgn\left(\left(pq\right)^{-1},pq\right)=\sgn\left(q^{-1},q\right)\sgn\left(p^{-1},p\right)\sgn\left(q^{-1},p^{-1}\right).$\\
   Beginning with the expression on the left,
   \begin{align*}
   \sgn(p,q)\sgn\left((pq)^{-1},pq\right) 
&=\sgn(p,q)\sgn\left(q,q^{-1}\right)\sgn\left(q,q^{-1}\right)\sgn\left((pq)^{-1},pq\right)\\
&=\sgn\left(pq,q^{-1}\right)\sgn\left(q,q^{-1}\right)\sgn\left((pq)^{-1},pq\right)\\
&=\sgn\left((pq)^{-1},pq\right)\sgn\left(pq,q^{-1}\right)\sgn\left(q,q^{-1}\right)\\
&=\sgn\left((pq)^{-1},p\right)\sgn\left(q,q^{-1}\right)\\
&=\sgn\left((pq)^{-1},p\right)\sgn\left(p,p^{-1}\right)\sgn\left(p,p^{-1}\right)\sgn\left(q,q^{-1}\right)\\
&=\sgn\left(q^{-1}p^{-1},p\right)\sgn\left(p,p^{-1}\right)\sgn\left(p,p^{-1}\right)\sgn\left(q,q^{-1}\right)\\
&=\sgn\left(q^{-1},p^{-1}\right)\sgn\left(p,p^{-1}\right)\sgn\left(q,q^{-1}\right)\\
&=\sgn\left(q^{-1},p^{-1}\right)\sgn\left(p^{-1},p\right)\sgn\left(q^{-1},q\right)
   \end{align*}
  \end{proof}
  \begin{theorem}
  If $\sgn$ is a proper twist on $G$ and if $x,y\in V=\tga{G}{\sgn}{\mathbb{K}},$ then
  $\conj{(xy)}=\conj{y}\conj{x}.$
  \end{theorem}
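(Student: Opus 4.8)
The plan is to extend the basis-vector identity $\conj{(i_pi_q)}=\conj{i}_q\conj{i}_p$ of the preceding theorem to arbitrary $x,y\in V$ by exploiting the (conjugate-)bilinearity of both sides. Observe first that, since $\sgn$ is proper, it is invertive by Theorem~\ref{T:conjid}, so the conjugate of Definition~\ref{D:conjugate} is defined; and by Theorem~\ref{T:conjugate} conjugation is additive, hence additive over finite sums, with $\conj{(cz)}=\conj{c}\,\conj{z}$ for $c\in\mathbb{K}$ and $z\in V$.

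First I would write $xy=\Sum{p,q}x_py_q(i_pi_q)$, note that each $i_pi_q=\sgn(p,q)i_{pq}$ is a scalar multiple of a basis vector so that $\conj{(i_pi_q)}=\sgn(p,q)\conj{i}_{pq}=\conj{i}_q\conj{i}_p$ by the preceding theorem, and then apply the conjugate using parts (iii) and (iv) of Theorem~\ref{T:conjugate} to get
\[
\conj{(xy)}=\Sum{p,q}\conj{(x_py_q)}\,\conj{(i_pi_q)}=\Sum{p,q}\conj{(x_py_q)}\,\conj{i}_q\conj{i}_p .
\]
Next I would expand the right-hand side: from Definition~\ref{D:conjugate}, $\conj{x}=\Sum{p}\conj{x}_p\conj{i}_p$ and $\conj{y}=\Sum{q}\conj{y}_q\conj{i}_q$, and bilinearity of the product on $V$ together with $(cz)w=z(cw)=c(zw)$ gives
\[
\conj{y}\,\conj{x}=\Sum{p,q}\bigl(\conj{y}_q\conj{i}_q\bigr)\bigl(\conj{x}_p\conj{i}_p\bigr)=\Sum{p,q}\conj{y}_q\conj{x}_p\,\conj{i}_q\conj{i}_p .
\]
Comparing the two displays term by term, the theorem reduces to the scalar identity $\conj{(x_py_q)}=\conj{y}_q\conj{x}_p$ in $\mathbb{K}$, which is just the defining anti-multiplicativity of the involution on $\mathbb{K}$ (and holds trivially when $\mathbb{K}$ is commutative, since then $\conj{(x_py_q)}=\conj{x}_p\conj{y}_q=\conj{y}_q\conj{x}_p$).

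The computation is otherwise routine; the only point demanding care is the bookkeeping of the two order reversals — the involution on $\mathbb{K}$ reverses $x_p$ and $y_q$, while the product on $V$ reverses $i_p$ and $i_q$ — and verifying that, with the help of $(cz)w=z(cw)=c(zw)$, they line up so that the $(p,q)$-term of $\conj{(xy)}$ equals the $(p,q)$-term of $\conj{y}\,\conj{x}$, whence the two sums agree without any need to collect like basis vectors.
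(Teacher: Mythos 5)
Your proposal is correct and follows essentially the same route as the paper: expand $xy=\Sum{p,q}x_py_q\,i_pi_q$, use additivity of conjugation to work termwise, apply the preceding basis-vector theorem $\conj{(i_pi_q)}=\conj{i}_q\conj{i}_p$ together with the anti-multiplicativity of the involution on $\mathbb{K}$, and regroup the double sum as $\conj{y}\,\conj{x}$. Your explicit remark that the scalar identity $\conj{(x_py_q)}=\conj{y}_q\conj{x}_p$ is the only point where the involution on $\mathbb{K}$ enters is a slight clarification of a step the paper leaves implicit, but the argument is the same.
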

  \begin{proof}
   \begin{align*}
     \conj{(xy)}&=\conj{\left(\left(\Sum{p}x_pi_p\right)\left(\Sum{q}y_qi_q\right)\right)}\\
                     &=\conj{\left(\Sum{p,q}x_py_qi_pi_q\right)}\\
                     &=\Sum{p,q}\conj{(x_py_qi_pi_q)}\\
                     &=\Sum{q,p}\conj{y}_q\,\conj{x}_pi_q^{-1}i_p^{-1}\\
                     &=\left(\Sum{q}\conj{y}_qi_q^{-1}\right)\left(\Sum{p}\conj{x}_pi_p^{-1}\right)\\
                     &=\conj{y}\conj{x}
   \end{align*}
  \end{proof}
  \begin{definition}
   The \emph{inner product} of elements $x$ and  $y$ in $V=\tga{G}{\sgn}{\mathbb{K}}$ is $\ip{x}{y}
   =\Sum{p}x_p\conj{y}_p.$
  \end{definition}
  \begin{theorem}\label{T:product}
   If $\sgn$ is a proper twist on $G,$ and if $x,y\in V=\tga{G}{\sgn}{\mathbb{K}},$ then the product $xy$ has the Fourier expansion
   \[ xy=\Sum{r}\ip{x}{i_r\conj{y}}i_r =\Sum{r}\ip{y}{\conj{x}i_r}i_r
   \]
  \end{theorem}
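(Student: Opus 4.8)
The plan is to fix $r\in G$ and compute the $i_r$-component of $xy$ in a form that manifestly matches $\ip{x}{i_r\conj y}$ and $\ip{y}{\conj x i_r}$, then sum over $r$. Note first that $\ip{z}{i_r}=z_r$ for every $z\in V$, since in $\Sum{p}z_p\conj{(i_r)_p}$ only the term $p=r$ survives; hence $z=\Sum{r}\ip{z}{i_r}i_r$ for all $z$, and in particular the claim reduces to the two scalar identities $\ip{xy}{i_r}=\ip{x}{i_r\conj y}$ and $\ip{xy}{i_r}=\ip{y}{\conj x i_r}$ for each $r$. By Theorem~\ref{T:stMat} (equivalently, reading off the product formula with $pq=r$) one has
\[
(xy)_r=\Sum{q}\sgn\!\left(rq^{-1},q\right)x_{rq^{-1}}y_q=\Sum{p}\sgn\!\left(p,p^{-1}r\right)x_p y_{p^{-1}r}.
\]

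The crux is two ``inverse-product'' identities that hold precisely because $\sgn$ is proper (hence invertive, by Theorem~\ref{T:conjid}). Writing $i_q^{-1}=\sgn(q,q^{-1})i_{q^{-1}}$ (Theorem~\ref{T:lr_inverse}) and applying Definition~\ref{D:proper}(1) with the substitution $p\mapsto r$, $q\mapsto q^{-1}$, together with invertivity to replace $\sgn(q^{-1},q)$ by $\sgn(q,q^{-1})$, one gets
\[
i_r\,i_q^{-1}=\sgn\!\left(rq^{-1},q\right)i_{rq^{-1}};
\]
symmetrically, from $i_p^{-1}=\sgn(p,p^{-1})i_{p^{-1}}$ and Definition~\ref{D:proper}(2) with $p\mapsto p^{-1}$, $q\mapsto r$,
\[
i_p^{-1}\,i_r=\sgn\!\left(p,p^{-1}r\right)i_{p^{-1}r}.
\]
I expect this to be the only genuinely fiddly step: picking exactly the right specializations of the two halves of properness and seeing where invertivity is (and is not) needed.

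To close the first expansion, use $\conj y=\Sum{q}\conj{y}_q i_q^{-1}$ (Definition~\ref{D:conjugate}) and the first identity to get $i_r\conj y=\Sum{q}\sgn(rq^{-1},q)\conj{y}_q\,i_{rq^{-1}}$, so after reindexing $p=rq^{-1}$ its $p$-component is $\sgn(p,p^{-1}r)\conj{y}_{p^{-1}r}$; substituting into $\ip{x}{w}=\Sum{p}x_p\conj{w}_p$ and using that the scalar involution is an involution fixing $\pm1$ gives $\ip{x}{i_r\conj y}=\Sum{p}\sgn(p,p^{-1}r)x_p y_{p^{-1}r}=(xy)_r$. The second expansion is the mirror image: $\conj x\,i_r=\Sum{p}\conj{x}_p\,i_p^{-1}i_r=\Sum{p}\sgn(p,p^{-1}r)\conj{x}_p\,i_{p^{-1}r}$, whose $q$-component (with $q=p^{-1}r$, i.e. $p=rq^{-1}$) is $\sgn(rq^{-1},q)\conj{x}_{rq^{-1}}$; feeding this into $\ip{y}{w}=\Sum{q}y_q\conj{w}_q$ returns $\Sum{q}\sgn(rq^{-1},q)x_{rq^{-1}}y_q=(xy)_r$, the values $\pm1$ being central. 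Multiplying by $i_r$ and summing over $r$ then yields both Fourier expansions.

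So the real work is purely the group bookkeeping inside the two inverse-product identities and invoking exactly the right halves of the properness hypothesis; everything after that is routine unwinding of the definitions of the product, the conjugate, and the inner product.
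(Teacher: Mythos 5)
Your proof is correct and follows essentially the same route as the paper's: both compute $i_r\conj{y}$ and $\conj{x}i_r$ by invoking the two halves of properness (with exactly the substitutions you identify, plus invertivity), reindex, and match against the coefficient of $i_r$ in $xy=\Sum{p,q}\sgn(p,q)x_py_qi_{pq}$. Your packaging of the properness step into the two inverse-product identities $i_ri_q^{-1}=\sgn(rq^{-1},q)i_{rq^{-1}}$ and $i_p^{-1}i_r=\sgn(p,p^{-1}r)i_{p^{-1}r}$ is only a cosmetic reorganization of the same computation.
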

  \begin{proof}
  ~\\
  (i) $\conj{y}
  =\Sum{s}\sgn\left(s,s^{-1}\right)\conj{y}_{s^{-1}}i_s.$ 
  Let $p=rs.$ Then
  \begin{align*} 
  i_r\conj{y}& =\Sum{s}\sgn\left(s,s^{-1}\right)\conj{y}_{s^{-1}}i_r i_s\\
  &=\Sum{s}\sgn(r,s)\sgn\left(s,s^{-1}\right)\conj{y}_{s^{-1}}i_{rs}\\
  &=\Sum{s}\sgn\left(rs,s^{-1}\right)\conj{y}_{s^{-1}}i_{rs}\\
  &=\Sum{p}\sgn\left(p,p^{-1}r\right)\conj{y}_{p^{-1}r}i_p
  \end{align*}
 Thus,
\begin{align*}
   \ip{x}{i_r \conj{y}}
    &=\Sum{p}\left(p,p^{-1}r\right)x_p\conj{\conj{y}}_{p^{-1}r}\\
    &=\Sum{p}\sgn\left(p,p^{-1}r\right)x_py_{p^{-1}r}
  \end{align*}
  (ii) $\conj{x}=\Sum{s}\sgn\left(s^{-1},s\right)\conj{x}_{s^{-1}}i_s.$ Let $q=sr.$ Then
   \begin{align*}
    \conj{x}i_r& =\Sum{s}\sgn\left(s^{-1},s\right)\conj{x}_{s^{-1}}i_si_r\\
                      & =\Sum{s}\sgn\left(s^{-1},s\right)\sgn(s,r)\conj{x}_{s^{-1}}i_{sr}\\
                      & =\Sum{s}\sgn\left(s^{-1},sr\right)\conj{x}_{s^{-1}}i_{sr}\\
                      & = \Sum{q}\sgn\left(rq^{-1},q\right)\conj{x}_{rq^{-1}}i_q
   \end{align*} 
   So $\ip{ y}{\conj{x}i_r } = \Sum{q}\sgn\left(rq^{-1},q\right)x_{rq^{-1}}y_q.$\\
  (iii) 
   \begin{align*}
     xy &= \Sum{p,q}x_py_qi_pi_q\\
                  &= \Sum{p,q}\sgn(p,q)x_py_qi_{pq}\\
                  &= \Sum{q,p}\sgn(p,q)x_py_qi_{pq}
   \end{align*} 
Let $pq=r.$ Then $p=rq^{-1}$ and $q=p^{-1}r.$\\
   Thus,\\
   $xy=\Sum{r,q}\sgn\left(rq^{-1},q\right)x_{rq^{-1}}y_qi_r=\Sum{r}\ip{y}{\conj{x}i_r}i_r.$\\
   And\\
   $xy=\Sum{r,p}\sgn\left(p,p^{-1}r\right)x_py_{p^{-1}r}i_r=\Sum{r}\ip{x}{i_r\conj{y}}i_r.$
  \end{proof}
\begin{corollary}
 If $\sgn$ is proper, then $\ip{xy}{i_r}=\ip{x}{i_r\conj{y}}=\ip{y}{\conj{x}i_r}.$
\end{corollary}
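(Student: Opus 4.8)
The plan is to observe that taking the inner product of an element of $V$ with a basis vector merely extracts the corresponding coordinate, and then to read off the coordinates of $xy$ directly from the Fourier expansion in Theorem \ref{T:product}.

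First I would record the elementary fact that for any $z=\Sum{p}z_pi_p\in V$ and any $r\in G$ we have $\ip{z}{i_r}=z_r$. Indeed, writing $i_r=\Sum{p}(i_r)_pi_p$, the coordinate $(i_r)_p$ equals $1$ when $p=r$ and $0$ otherwise; since $\conj{1}=1$ and $\conj{0}=0$ in $\mathbb{K}$, the conjugated coordinates $\conj{(i_r)}_p$ take the same values, so $\ip{z}{i_r}=\Sum{p}z_p\conj{(i_r)}_p=z_r$.

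Next I would invoke Theorem \ref{T:product}: because $\sgn$ is proper, $xy=\Sum{r}\ip{x}{i_r\conj{y}}i_r=\Sum{r}\ip{y}{\conj{x}i_r}i_r$. Thus the $r$-th coordinate of $xy$ is simultaneously $\ip{x}{i_r\conj{y}}$ and $\ip{y}{\conj{x}i_r}$, i.e. $(xy)_r=\ip{x}{i_r\conj{y}}=\ip{y}{\conj{x}i_r}$. Combining this with the coordinate-extraction identity applied to $z=xy$ gives $\ip{xy}{i_r}=(xy)_r=\ip{x}{i_r\conj{y}}=\ip{y}{\conj{x}i_r}$, which is the assertion.

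There is essentially no obstacle here: the result is a one-line consequence of Theorem \ref{T:product} once the coordinate-extraction lemma is in hand. The only point requiring a word of care is the behavior of the ring conjugation on the scalars $0$ and $1$, needed to see that the coordinates of a basis vector are fixed by conjugation; this is either part of the standing hypotheses on $\mathbb{K}$ or follows from the additive and homogeneity properties already established for $\conj{\,\cdot\,}$. Everything else is direct substitution.
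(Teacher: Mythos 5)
Your argument is correct and is exactly the intended derivation: the paper states this corollary without proof as an immediate consequence of Theorem \ref{T:product}, and your coordinate-extraction observation $\ip{z}{i_r}=z_r$ together with the uniqueness of coefficients in the basis expansion is precisely the step being left to the reader. Nothing to add.
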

\begin{definition}
A \emph{proper} *-algebra is an inner product space with an involution $(*)$ satisfying the \emph{adjoint} properties:
\begin{eqnarray}
\ip{xy}{z}=\ip{y}{\conj{x}z}\\
\ip{x}{yz}=\ip{x\conj{z}}{y}
\end{eqnarray} for all $x,y,z$ in the algebra.
\end{definition}
Note: The Cayley-Dickson algebras are known to be proper *-algebras \cite{M1998}.
\begin{theorem}
 If $V=\tga{G}{\sgn}{\mathbb{K}}$ and if $\sgn$ is proper, then $V$ is a proper *-algebra.
\end{theorem}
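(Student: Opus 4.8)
The plan is to verify the two adjoint properties directly, using the Fourier expansion of Theorem~\ref{T:product} together with the already-established algebraic facts that $\sgn$ proper implies $\conj{(xy)}=\conj{y}\conj{x}$ and $\conj{\conj{x}}=x$, and that the conjugate is $\mathbb{K}$-conjugate-linear. First I would note that since $\sgn$ is proper it is invertive (Theorem~\ref{T:conjid}), so the conjugate of Definition~\ref{D:conjugate} is defined and, by Theorem~\ref{T:conjugate}, is an involution; hence $V$ is an inner product space with an involution, and it only remains to check the two displayed identities.

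For the first identity, the key observation is that the inner product picks out Fourier coefficients: by definition $\ip{w}{i_r}=w_r$ and, more generally, $\ip{w}{z}=\Sum{r}w_r\conj{z}_r=\Sum{r}\ip{w}{i_r}\conj{z}_r$. So I would expand $\ip{xy}{z}=\Sum{r}\ip{xy}{i_r}\conj{z}_r$ and apply the corollary just proved, $\ip{xy}{i_r}=\ip{y}{\conj{x}i_r}$, to get $\ip{xy}{z}=\Sum{r}\ip{y}{\conj{x}i_r}\conj{z}_r$. Then I would recognize the right-hand side as $\ip{y}{\conj{x}z}$ by running the same Fourier-coefficient computation in reverse: writing $z=\Sum{r}z_ri_r$ and using conjugate-linearity of the conjugate in the second slot together with the fact that $\ip{y}{\cdot}$ is conjugate-linear in its second argument, $\ip{y}{\conj{x}z}=\Sum{r}\ip{y}{\conj{x}i_r}\conj{z}_r$. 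Comparing gives $\ip{xy}{z}=\ip{y}{\conj{x}z}$.

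For the second identity I would argue similarly, using the other half of the corollary, $\ip{xy}{i_r}=\ip{x}{i_r\conj{y}}$, or alternatively derive it from the first identity by a conjugation trick: replace $x$ by $x$, $y$ by $\conj{z}$, $z$ by $\conj{y}$ in a suitably massaged version, or more cleanly, observe $\ip{x}{yz}=\overline{\ip{yz}{x}}$—but this requires knowing $\ip{a}{b}$ relates to $\ip{b}{a}$ via $\mathbb{K}$-conjugation, which holds here since $\ip{a}{b}=\Sum{p}a_p\conj{b}_p$ and $\conj{\ip{b}{a}}=\Sum{p}\conj{b_p\conj{a}_p}=\Sum{p}\conj{b}_pa_p$. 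Granting that, $\ip{x}{yz}=\overline{\ip{yz}{x}}=\overline{\ip{z}{\conj{y}x}}=\ip{\conj{y}x}{z}$, and then I would want $\ip{\conj{y}x}{z}=\ip{x\conj{z}}{y}$; this still needs work, so the safer route is the direct Fourier-coefficient computation via $\ip{x}{i_r\conj{y}}$, paralleling the first case with the roles of left and right multiplication swapped.

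The main obstacle is bookkeeping rather than any deep idea: one must be careful about whether the ring $\mathbb{K}$ is commutative and about conjugate-linearity conventions (the inner product is conjugate-linear in the second slot, the involution is conjugate-linear), so that the Fourier coefficients are matched with the correct conjugations on the $z_r$'s when reassembling $\ip{y}{\conj{x}z}$ and $\ip{x\conj{z}}{y}$ from their component sums. Provided those conventions are tracked consistently, both adjoint identities reduce to the corollary $\ip{xy}{i_r}=\ip{x}{i_r\conj{y}}=\ip{y}{\conj{x}i_r}$ applied termwise and summed against the coefficients of the third vector, so the proof is short once the corollary is in hand.
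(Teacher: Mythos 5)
Your proposal is correct and follows essentially the same route as the paper's own proof: expand the remaining vector over the basis $\{i_r\}$, apply the corollary $\ip{xy}{i_r}=\ip{x}{i_r\conj{y}}=\ip{y}{\conj{x}i_r}$ termwise, and reassemble by (conjugate-)linearity, using the Hermitian symmetry $\ip{a}{b}=\conj{\ip{b}{a}}$ (which you verify) to bring $yz$ into the first slot for the second identity. The conjugation trick you abandon is indeed unnecessary; the ``direct route'' you settle on is precisely the paper's argument.
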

\begin{proof}
 Let $x,y,z\in G$. Then 
\begin{eqnarray*}
 & \ip{xy}{z}&=\ip{xy}{\sum_rz_ri_r}\\
 &           &=\sum_r\ip{xy}{z_ri_r}\\
 &           &=\sum_r\conj{z}_r\ip{xy}{i_r}\\
 &           &=\sum_r\conj{z}_r\ip{y}{\conj{x}i_r}\\
 &           &=\sum_r\ip{y}{\conj{x}\left(z_ri_r\right)}\\
 &           &=\ip{y}{\conj{x}\sum_rz_ri_r}\\
 &           &=\ip{y}{\conj{x}z}
\end{eqnarray*}
and
\begin{eqnarray*}
 & \ip{x}{yz}&=\ip{\sum_rx_ri_r}{yz}\\
 &           &=\sum_r\ip{x_ri_r}{yz}\\
 &           &=\sum_rx_r\ip{i_r}{yz}\\
 &           &=\sum_rx_r\conj{\ip{yz}{i_r}}\\
 &           &=\sum_rx_r\conj{\ip{y}{i_r\conj{z}}}\\
 &           &=\sum_rx_r\ip{i_r\conj{z}}{y}\\
 &           &=\sum_r\ip{\left(x_ri_r\right)\conj{z}}{y}\\
 &           &=\ip{\sum_r\left(x_ri_r\right)\conj{z}}{y}\\
 &           &=\ip{x\conj{z}}{y}
\end{eqnarray*}
\end{proof}
The next theorem and its corollaries apply specifically to real Cayley-Dickson and Clifford algebras.
  \begin{theorem}\label{T:zero} Suppose $V=\tgar{G}{\sgn}$, $\sgn$ is proper and $p=p^{-1}$ for all $p\in G.$ Then for all $p\in G,$
   $\left[1-\sgn(p,p)\right]\ip{x}{i_px}=0.$
  \end{theorem}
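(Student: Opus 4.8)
The plan is to exploit the first adjoint identity together with the symmetry of the real inner product. First I would identify the conjugate of a basis vector under the standing hypothesis $p=p^{-1}$: since $\sgn$ is proper it is invertive by Theorem~\ref{T:conjid}, so $i_p^{-1}=\sgn(p,p^{-1})i_{p^{-1}}=\sgn(p,p)i_p$, and hence $\conj{i_p}=\sgn(p,p)i_p$.

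Next I would apply the adjoint property $\ip{xy}{z}=\ip{y}{\conj{x}z}$ with $x$ replaced by $i_p$ and with $y=z=x$, which gives
\[
\ip{i_px}{x}=\ip{x}{\conj{i_p}\,x}=\sgn(p,p)\ip{x}{i_px},
\]
the last step using $\conj{i_p}=\sgn(p,p)i_p$ and the fact that the inner product is $\mathbb{R}$-linear in its second argument, so the scalar $\sgn(p,p)=\pm1$ factors out. Then, because $\mathbb{K}=\mathbb{R}$, conjugation of scalars is trivial and $\ip{x}{y}=\Sum{q}x_qy_q$ is symmetric; in particular $\ip{i_px}{x}=\ip{x}{i_px}$. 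Combining the two relations yields $\ip{x}{i_px}=\sgn(p,p)\ip{x}{i_px}$, that is, $\left[1-\sgn(p,p)\right]\ip{x}{i_px}=0$.

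There is essentially no obstacle; the only points needing care are the computation of $\conj{i_p}$ from the invertive property under $p=p^{-1}$, and the observation that over $\mathbb{R}$ the inner product is symmetric and linear (not merely conjugate-linear) in the second slot. As a remark, when $\sgn(p,p)=-1$ the identity forces $\ip{x}{i_px}=0$ for every $x\in V$, so $i_px\perp x$ always; when $\sgn(p,p)=1$ the statement is vacuous.
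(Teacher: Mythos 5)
Your proof is correct, but it follows a genuinely different route from the paper's. The paper argues directly at the level of coordinates: it expands $i_px=\sum_r\sgn(p,pr)x_{pr}i_r$, uses the second propriety identity (with $p=p^{-1}$) in the form $\sgn(p,pr)=\sgn(p,p)\sgn(p,r)$, and then splits $2\ip{x}{i_px}$ into two copies of the sum, reindexing one of them by $q=pr$, to arrive at $2\ip{x}{i_px}=\left[1+\sgn(p,p)\right]\ip{x}{i_px}$, which is equivalent to the stated conclusion. You instead invoke the already-established adjoint property $\ip{i_px}{x}=\ip{x}{\conj{i_p}\,x}$, compute $\conj{i_p}=\sgn(p,p)i_p$ from invertivity and $p=p^{-1}$, and close the loop with the symmetry and bilinearity of the real inner product. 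Both arguments use propriety in an essential way (yours indirectly, through the proper *-algebra theorem), and both correctly exploit $\mathbb{K}=\mathbb{R}$ and $p=p^{-1}$. Your version is shorter and makes the conceptual content transparent: the theorem is just the statement that $L_{i_p}$ is self-adjoint or skew-adjoint according to the sign of $\sgn(p,p)$, so $\ip{x}{i_px}$ must vanish in the skew case. The paper's computation has the merit of being self-contained and of not depending on the earlier chain Theorem~\ref{T:product} $\Rightarrow$ adjoint properties, but it proves nothing more. Your concluding remark matches the paper's first corollary exactly.
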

  \begin{proof}
   $i_px=\sum_qx_qi_pi_q=\sum_q\sgn(p,q)x_qi_{pq}=\sum_r\sgn(p,pr)x_{pr}i_r$ where $q=pr.$
   
   \begin{align*}
   2\ip{x}{i_px}&=2\sum_r\sgn(p,pr)x_rx_{pr}\\
                &=2\,\sgn(p,p)\sum_r\sgn(p,r)x_rx_{pr}\\
		&=\sgn(p,p)\sum_r\sgn(p,r)x_rx_{pr}+\sgn(p,p)\sum_q\sgn(p,q)x_qx_{pq}\\
		&=\sgn(p,p)\sum_r\sgn(p,r)x_rx_{pr}+\sgn(p,p)\sum_r\sgn(p,pr)x_{pr}x_r\\
		&=\sgn(p,p)\sum_r\sgn(p,r)x_rx_{pr}+\sgn(p,p)\sgn(p,p)\sum_r\sgn(p,r)x_rx_{pr}\\
		&=\left[\sgn(p,p)+1\right]\sum_r\sgn(p,r)x_rx_{pr}\\
		&=\left[\sgn(p,p)+1\right]\sgn(p,p)\sum_r\sgn(p,pr)x_rx_{pr}\\
		&=\left[1+\sgn(p,p)\right]\ip{x}{i_px}
   \end{align*}
   Thus $\left[1-\sgn(p,p)\right]\ip{x}{i_px}=0.$
  \end{proof}
  \begin{corollary}
   Suppose $V=\tgar{G}{\sgn}$, $\sgn$ is proper and $p=p^{-1}$ for all $p\in G.$ Then $\ip{x}{i_px}=0$ provided $\sgn(p,p)=-1.$
  \end{corollary}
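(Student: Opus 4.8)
The final statement is a corollary to Theorem~\ref{T:zero}, so the plan is essentially to specialize the identity $\left[1-\sgn(p,p)\right]\ip{x}{i_px}=0$ to the case $\sgn(p,p)=-1$. First I would invoke Theorem~\ref{T:zero} directly, which applies since the hypotheses are identical: $V=\tgar{G}{\sgn}$, the twist $\sgn$ is proper, and $p=p^{-1}$ for all $p\in G$. This gives $\left[1-\sgn(p,p)\right]\ip{x}{i_px}=0$ for every $p\in G$.

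The remaining step is the observation that when $\sgn(p,p)=-1$ the scalar coefficient $1-\sgn(p,p)$ equals $2$, which is a unit in $\mathbb{R}$, so it can be cancelled. Concretely, $0 = \left[1-\sgn(p,p)\right]\ip{x}{i_px} = 2\ip{x}{i_px}$, and dividing by $2$ yields $\ip{x}{i_px}=0$. This is the only place where the restriction to the reals (rather than an arbitrary ring $\mathbb{K}$) is used: we need $2$ to be invertible, or at least not a zero divisor, in the scalar ring.

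I do not anticipate any real obstacle here; the corollary is purely a matter of reading off the special case. The only thing worth flagging is the implicit reliance on the base ring being $\mathbb{R}$ (signalled by the notation $\tgar{G}{\sgn}$), which guarantees $2\neq 0$; over a ring of characteristic $2$ the conclusion would fail to follow from the stated identity. Thus the proof is a one-line deduction: apply Theorem~\ref{T:zero}, substitute $\sgn(p,p)=-1$, and cancel the factor of $2$.

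\begin{proof}
By Theorem~\ref{T:zero}, $\left[1-\sgn(p,p)\right]\ip{x}{i_px}=0$ for all $p\in G$. If $\sgn(p,p)=-1$, then $1-\sgn(p,p)=2$, so $2\ip{x}{i_px}=0$. Since $2$ is a unit in $\mathbb{R}$, it follows that $\ip{x}{i_px}=0$.
\end{proof}
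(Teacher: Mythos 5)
Your proof is correct and is exactly the deduction the paper intends: the corollary is an immediate specialization of Theorem~\ref{T:zero}, cancelling the nonzero factor $1-\sgn(p,p)=2$ over $\mathbb{R}$. Your remark about needing $2$ to be invertible (hence the restriction to $\tgar{G}{\sgn}$ rather than a general ring $\mathbb{K}$) is a sensible observation consistent with the paper's setup.
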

  \begin{corollary}
   Suppose $V=\tgar{G}{\sgn}$, $\sgn$ is proper and $p=p^{-1}$ for all $p\in G.$ Then $\ip{x\conj{x}}{i_p}=0$ provided $\sgn(p,p)=-1.$
  \end{corollary}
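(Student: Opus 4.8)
The plan is to reduce this statement to the immediately preceding corollary, $\ip{x}{i_px}=0$ when $\sgn(p,p)=-1$, by moving $i_p$ from one side of the inner product to the other via the adjoint properties that $V$ enjoys as a proper $*$-algebra (established earlier from the hypothesis that $\sgn$ is proper). One preliminary observation makes the bookkeeping clean: since $\mathbb{K}=\mathbb{R}$, conjugation of scalars is trivial, so $\ip{x}{y}=\Sum{p}x_p\conj{y}_p=\Sum{p}x_py_p$ is \emph{symmetric}, i.e. $\ip{a}{b}=\ip{b}{a}$ for all $a,b\in V$. I will use this symmetry freely.

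First I would rewrite $\ip{x\conj{x}}{i_p}=\ip{i_p}{x\conj{x}}$ by symmetry, and then apply the second adjoint identity $\ip{x}{yz}=\ip{x\conj{z}}{y}$ with the substitution $x\mapsto i_p$, $y\mapsto x$, $z\mapsto\conj{x}$, which gives $\ip{i_p}{x\conj{x}}=\ip{i_p\,\conj{\conj{x}}}{x}$. By Theorem~\ref{T:conjugate}(ii) the involution collapses, $\conj{\conj{x}}=x$, so this equals $\ip{i_px}{x}$; a final application of symmetry yields the identity $\ip{x\conj{x}}{i_p}=\ip{x}{i_px}$.

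The preceding corollary then closes the argument: under the standing hypotheses ($\sgn$ proper, $p=p^{-1}$ for all $p\in G$), $\ip{x}{i_px}=0$ whenever $\sgn(p,p)=-1$, hence $\ip{x\conj{x}}{i_p}=0$ under the same condition.

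I do not expect a genuine obstacle here; the argument is essentially a relabeling exercise. The one point that needs care is instantiating the adjoint identity with the correct assignment of variables so that the conjugate lands on $\conj{x}$ — allowing $\conj{\conj{x}}=x$ to simplify it — rather than on $x$. An alternative route uses the first adjoint identity $\ip{xy}{z}=\ip{y}{\conj{x}z}$ together with $\conj{i_p}=\sgn(p,p)i_p$ (valid since $p=p^{-1}$), but going through the second identity avoids introducing that extra sign factor.
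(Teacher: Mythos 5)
Your argument is correct and is the intended derivation: the paper states this corollary without proof, and it is meant to follow from the preceding corollary via the adjoint property established for proper twists, exactly as you do. A minor shortcut: applying $\ip{x}{yz}=\ip{x\conj{z}}{y}$ directly with $x\mapsto x$, $y\mapsto i_p$, $z\mapsto x$ gives $\ip{x}{i_px}=\ip{x\conj{x}}{i_p}$ in one step, with no appeal to symmetry of the real inner product.
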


\section{The Cayley-Dickson Algebras}

The complex numbers can be constructed as ordered pairs of real numbers, and the quaternions as ordered pairs of complex numbers. The Cayley-Dickson process continues this development. Ordered pairs of quaternions are octonions and ordered pairs of octonions are sedenions. For each non-negative integer $N$ the Cayley-Dickson algebra $\cds{N}$ is a properly twisted group algebra on $\mathbb{R}^{2^N}$, with $\cds{0}=\mathbb{R}$ denoting the reals, $\cds{1}=\mathbb{C}$ the complex numbers, $\cds{2}=\mathbb{H}$ the quaternions and $\cds{3}=\mathbb{O}$ the octonions \cite{B2001}.
 
 In the remainder of the paper we will develop the real Cayley-Dickson and Clifford algebras as proper subsets of the Hilbert space $\ell^2$ of square summable sequences. The canonical basis for $\ell_2$ will be indexed by the group $\mathbb{Z}^+=\{0,1,2,3,\cdots\}$ of non-negative integers with group operation the `bit-wise exclusive or' of the binary representations of the elements.

Equate a real number $r$ with the sequence $r,0,0,0,\cdots$. Given two real number sequences $x=x_0,x_1,x_2,\cdots$ and $y=y_0,y_1,y_2,\cdots$, equate the ordered pair $\left( x,y \right)$ with the `shuffled' sequence
 \[\left( x,y \right)=x_0,y_0,x_1,y_1,x_2,y_2,\cdots\]
 
 The sequences
 \begin{align*}
 \noindent i_0 \quad &= 1,0,0,0,\cdots\\
 i_1 \quad &= 0,1,0,0,\cdots\\
 i_2 \quad &= 0,0,1,0,\cdots\\
 i_3 \quad &= 0,0,0,1,\cdots\\
 &\quad\vdots \\
 i_{n-1}&
\end{align*}

 form the canonical basis for $\mathbb{R}^n$ and satisfy the identities
 \begin{eqnarray}
 \noindent i_{2n}  &=\left( i_n,0 \right)\\
           i_{2n+1}&=\left( 0,i_n\right)
 \end{eqnarray}

 This produces a numbering of the unit basis vectors for the Cayley-Dickson algebras which differs from other numberings, yet it arises naturally from equating ordered pairs with shuffled sequences. It also produces, quite naturally, the Cayley-Dickson conjugate identity.
 
 In order to define a conjugate satisfying $x+\conj{x}\in\mathbb{R}$, we must have $\conj{i}_0=i_0$ (else 1 will not be the identity) and $\conj{i}_n=-i_n$ for $n>0.$ This leads to the result
	 
\begin{equation}\label{E:conj}
 \conj{(x,y)}=\left( \conj{x},-y\right) 
\end{equation}

The usual way of multiplying ordered pairs of real numbers $(a,b)$ and $(c,d)$ regarded as complex numbers is

\[ (a,b)(c,d)=(ac-bd,ad+bc) \]

This method of multiplying ordered pairs, if repeated for ordered pairs of ordered pairs ad infinitum, produces a sequences of algebras of dimension $2^N$. However, the four dimensional algebra produced when $N=2$ is not the quaternions but a four dimensional algebra with zero divisors. The twist on $\mathbb{S}_n=\mathbb{R}^{2^n}$ produced by this product is $\hdm(p,q)=(-1)^{\sob{p\wedge q}}$ with $p\wedge q$ the bitwise `and' of the binary representations of $p$ and $q$ and $\sob{p}$ the `sum of the bits' of the binary representation of $p.$  $i_pi_q=(-1)^{<p\wedge q>}i_{pq}$ where $pq=p\veebar q$: the `bit-wise' ``exclusive or'' of $p$ and $q.$ (This group product is equivalent to addition in $\mathbb{Z}_2^N.$) Since $\hdm$, considered as a matrix, is a Hadamaard matrix, $\hdm(p,q)=(-1)^{\sob{p\wedge q}}$ may be termed the `Hadamaard twist'. It is a simple exercise to show that the Hadamaard twist is associative.

The product which produces the quaternions from the complex numbers is the Cayley-Dickson product

\begin{equation}\label{E:pairproduct}
(a,b)(c,d)=(ac-d\conj{b},\conj{a}d+cb) 
\end{equation}

For real numbers $a,b,c,d$ this is the complex number product. For complex numbers $a,b,c,d$ this is the quaternion product. For quaternions, it is the octonion product, etc.

One may establish immediately that $i_0=(1,0)$ is both the left and the right identity.
Furthermore, applying this product to all the unit basis vectors yields the following identities:

\begin{eqnarray}
&i_{2p}i_{2q}    &=\left( i_p,0 \right)\left( i_q,0 \right)= \left(i_pi_q,0 \right)\\
&i_{2p}i_{2q+1}  &=\left( i_p,0 \right)\left( 0,i_q \right)= \left( 0,\conj{i}_pi_q\right)\\
&i_{2p+1}i_{2q}  &=\left( 0,i_p \right)\left( i_q,0 \right)= \left( 0,i_qi_p \right)\\
&i_{2p+1}i_{2q+1}&=\left( 0,i_p \right)\left( 0,i_q \right)=-\left( i_q\conj{i}_p,0 \right)
\end{eqnarray}

This product on the unit basis vectors recursively defines a product and a twist $\cyd$ on the indexing sets $\Z{N}$ for each Cayley-Dickson space $\cds{N}$ in such a way that $\cds{N}\subset\cds{N+1}\subset\ell^2$ for all $N$. The product $pq$ of elements of $\Z{N}$ implied by these identities is the bit-wise `exclusive or' $p\veebar q$ of the binary representations of $p$ and $q,$ which is equivalent to addition in $\Z{N}.$ For this product, 0 is the identity and $p^{-1}=p$ for all $p\in \Z{N}.$ 

These identities imply the following defining properties of the Cayley-Dickson twist $\cyd:$

 \begin{eqnarray}\label{P:cyd1}
   \cyd(0,0)&=&\cyd(p,0)=\cyd(0,q)=1\\     
   \cyd(2p,2q)&=&\cyd(p,q)\\     
   \cyd(2p+1,2q)&=&\cyd(q,p)\\    
   \cyd(2p,2q+1)&=&\begin{cases} -\cyd(p,q)&\text{if\ }p\ne0\\                                                       1 &\text{otherwise}\end{cases}
 \end{eqnarray}
 \begin{eqnarray}\label{P:cyd5}
  \cyd(2p+1,2q+1)&=&\begin{cases}\cyd(q,p)&\text{if\ }p\ne0\\
                                                       -1&\text{otherwise}    
                                          \end{cases}
 \end{eqnarray}

These properties, in turn, imply the \emph{quaternion} properties:

For $0\ne p\ne q\ne 0$,
 \begin{eqnarray}
  \cyd(p,p)&=&-1\\
  \cyd(p,q)&=&-\cyd(q,p)\\
  \cyd(p,q)&=&\cyd(q,pq)=\cyd(pq,p)
 \end{eqnarray}

The quaternion properties imply that if $0\ne p\ne q\ne 0$ and if $i_pi_q=i_r$ then $i_pi_q=i_qi_r=i_ri_p$ and $i_pi_q=-i_qi_p.$ Thus elements of $\Z{k}$ for $k\ge2$ can be arranged into triplets of the form $(p,q,pq)$ for which $i_pi_q=i_{pq}.$ For $\Z{3}$ these are $(1,2,3),(1,4,5),(1,6,7),(2,4,6),(2,7,5),(3,6,5)$ and $(3,7,4).$

The quaternion properties also imply that $\cyd$ is a proper sign function. That is, for all $p,q$,
 \begin{eqnarray}
  \cyd(p,q)\cyd(q,q)=\cyd(pq,q)\\
  \cyd(p,p)\cyd(p,q)=\cyd(p,pq)
 \end{eqnarray}

Thus Cayley-Dickson algebras are proper *-algebras satisfying the adjoint properties:
For all $x,y,z,$
\begin{eqnarray*}
 \ip{xy}{z}=\ip{y}{\conj{x}z}\\
 \ip{x}{yz}=\ip{x\conj{z}}{y}
\end{eqnarray*}

Left open is the question whether $\ell^2$ is the completion of the sequence of Cayley-Dickson algebras. It is clear that if $x\in\ell^2$ then $xy\in\ell^2$ provided $y\in\cds{N}.$ It is not clear, however that $xy\in\ell^2$ for all $x,y\in\ell^2.$

Since the interior of $\Z{N}\times\Z{N}$ is anti-symmetric, by theorem \ref{T:SAS} we have

\begin{theorem}
\begin{eqnarray*}
&\twisteq{x}{y}&=x_0y + y_0x + \ip{x}{\conj{y}} -2x_0y_0\\
&              &=x_0y + y_0x -\ip{x}{y}
\end{eqnarray*}

\end{theorem}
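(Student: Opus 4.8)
The plan is to derive both displayed identities directly from Theorem~\ref{T:SAS}, since the hypothesis of that theorem (the twist $\cyd$ is invertive, i.e.\ $\cyd(p,p^{-1})=\cyd(p^{-1},p)$) is satisfied here: $\cyd$ is proper, hence invertive by Theorem~\ref{T:conjid}. First I would recall that in the Cayley-Dickson setting $G=\Z{N}$ with $p^{-1}=p$ for all $p$, and the identity is $0$. The key structural input, stated in the surrounding text, is that the interior $G_0^2$ of $\Z{N}\times\Z{N}$ is entirely anti-symmetric with respect to $\cyd$; equivalently $G_0^2+\cyd=\emptyset$. Feeding this into the formula for $\twisteq{x}{y}$ from Theorem~\ref{T:SAS} kills the last sum outright, leaving
\[
\twisteq{x}{y}=x_0y+y_0x+\Sum{p}\cyd(p,p^{-1})x_py_{p^{-1}}-2x_0y_0.
\]

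Next I would simplify the remaining sum. Since $p^{-1}=p$ and $\cyd(p,p)=\cyd(p,p^{-1})$, for $p\neq0$ the quaternion property $\cyd(p,p)=-1$ gives $\cyd(p,p^{-1})=-1$, while for $p=0$ we have $\cyd(0,0)=1$. Hence
\[
\Sum{p}\cyd(p,p^{-1})x_py_{p^{-1}}=x_0y_0-\Sum{p\neq0}x_py_p = 2x_0y_0-\Sum{p}x_py_p.
\]
On the other hand, because $\conj{y}=y_0i_0-\Sum{p\neq0}y_pi_p$ (the conjugate negates all non-real components in a Cayley-Dickson algebra), the inner product satisfies $\ip{x}{\conj{y}}=\Sum{p}x_p\conj{(\conj{y})}_p$; one checks $\conj{(\conj{y})}_0=y_0$ and $\conj{(\conj{y})}_p=-y_p$ for $p\neq0$, so $\ip{x}{\conj{y}}=x_0y_0-\Sum{p\neq0}x_py_p$, which equals the sum just computed. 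Substituting yields $\twisteq{x}{y}=x_0y+y_0x+\ip{x}{\conj{y}}-2x_0y_0$, the first line. For the second line, note $\ip{x}{y}=\Sum{p}x_py_p$ (the reals are self-conjugate componentwise), so $\ip{x}{\conj{y}}-2x_0y_0 = x_0y_0-\Sum{p\neq0}x_py_p-2x_0y_0 = -\Sum{p}x_py_p = -\ip{x}{y}$, giving $\twisteq{x}{y}=x_0y+y_0x-\ip{x}{y}$.

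I expect no serious obstacle here: the argument is essentially bookkeeping once the anti-symmetry of the interior is granted. The one point requiring care is the precise relationship between the componentwise sums $\Sum{p}\cyd(p,p)x_py_p$ and the inner products $\ip{x}{\conj{y}}$, $\ip{x}{y}$ — in particular getting the $x_0y_0$ terms and signs to line up — but this is a direct unwinding of Definition~\ref{D:conjugate} together with $\cyd(0,0)=1$ and $\cyd(p,p)=-1$ for $p\neq0$. The statement of the theorem as a \texttt{theorem} environment with two aligned lines is simply the consolidation of these two equalities, so the proof can be presented in a few lines citing Theorem~\ref{T:SAS} and the quaternion properties.
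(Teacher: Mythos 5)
Your argument is correct and is exactly the route the paper intends: the paper offers no separate proof, merely the remark that the interior of $\Z{N}\times\Z{N}$ is anti-symmetric so that the symmetric-interior sum in Theorem~\ref{T:SAS} vanishes, after which the identification of $\sum_p\cyd(p,p)x_py_p$ with $\ip{x}{\conj{y}}$ and the sign bookkeeping for $-\ip{x}{y}$ are precisely the steps you carry out. No gaps.
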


\section{The Cayley-Dickson Twist on $\mathbb{Z}^+$}

The recursive definition \ref{P:cyd1}--\ref{P:cyd5} of the Cayley-Dickson twist on $\mathbb{Z}^+$ may be restated as follows:
For $p,q\in\mathbb{Z}^+$ and $r,s\in\{0,1\}$,
\begin{eqnarray}
 \cyd(0,0)      &=&1\\
 \cyd(2p+r,2q+s)&=&\cyd(p,q)A_{pq}(r,s)
\end{eqnarray}
where 
\begin{eqnarray}
A_{pq} 
 & = & \left(\begin{array}{rr}
                      1 & 1\\
		      1 &-1
                     \end{array}\right) \text{\ if\ }p=0\\
 & = & \left(\begin{array}{rr}
                      1 & -1\\
		      1 &  1
                     \end{array}\right) \text{\ if\ }0\ne p=q \text{\ or\ }p\ne q=0\\
 & = & \left(\begin{array}{rr}
                       1 & -1\\
		      -1 & -1
                     \end{array}\right)  \text{\ if\ }0\ne p\ne q\ne0.
\end{eqnarray}

Define $\cyd_0=(1)$. Then, for each non-negative integer $N$, $\cyd_{N+1}$ is a partitioned matrix defined by
\begin{equation}
 \cyd_{N+1}=\left(\cyd_N(p,q)A_{pq}\right)
\end{equation}

The twist for the sedenions, octonions, quaternions, complex numbers and reals is given by the matrix $\cyd_4:$

$\left(
 \begin{array}{rr|rr|rr|rr|rr|rr|rr|rr}
 1 &  1 &  1 &  1 &  1 &  1 &  1 &  1 &  1 &  1 &  1 &  1 &  1 &  1 &  1 &  1\\
 1 & -1 &  1 & -1 &  1 & -1 &  1 & -1 &  1 & -1 &  1 & -1 &  1 & -1 &  1 & -1\\
\hline 
 1 & -1 & -1 &  1 &  1 & -1 & -1 &  1 &  1 & -1 & -1 &  1 &  1 & -1 & -1 &  1\\
 1 &  1 & -1 & -1 & -1 & -1 &  1 &  1 & -1 & -1 &  1 &  1 & -1 & -1 &  1 &  1\\
\hline 
 1 & -1 & -1 &  1 & -1 &  1 &  1 & -1 &  1 & -1 & -1 &  1 & -1 &  1 &  1 & -1\\
 1 &  1 &  1 &  1 & -1 & -1 & -1 & -1 & -1 & -1 &  1 &  1 &  1 &  1 & -1 & -1\\
\hline 
 1 & -1 &  1 & -1 & -1 &  1 & -1 &  1 & -1 &  1 & -1 &  1 &  1 & -1 &  1 & -1\\
 1 &  1 & -1 & -1 &  1 &  1 & -1 & -1 &  1 &  1 &  1 &  1 & -1 & -1 & -1 & -1\\
\hline 
 1 & -1 & -1 &  1 & -1 &  1 &  1 & -1 & -1 &  1 &  1 & -1 &  1 & -1 & -1 &  1\\
 1 &  1 &  1 &  1 &  1 &  1 & -1 & -1 & -1 & -1 & -1 & -1 & -1 & -1 &  1 &  1\\
\hline 
 1 & -1 &  1 & -1 &  1 & -1 &  1 & -1 & -1 &  1 & -1 &  1 & -1 &  1 & -1 &  1\\
 1 &  1 & -1 & -1 & -1 & -1 & -1 & -1 &  1 &  1 & -1 & -1 &  1 &  1 &  1 &  1\\
\hline 
 1 & -1 & -1 &  1 &  1 & -1 & -1 &  1 & -1 &  1 &  1 & -1 & -1 &  1 &  1 & -1\\
 1 &  1 &  1 &  1 & -1 & -1 &  1 &  1 &  1 &  1 & -1 & -1 & -1 & -1 & -1 & -1\\
\hline 
 1 & -1 &  1 & -1 & -1 &  1 & -1 &  1 &  1 & -1 &  1 & -1 & -1 &  1 & -1 &  1\\
 1 &  1 & -1 & -1 &  1 &  1 &  1 &  1 & -1 & -1 & -1 & -1 &  1 &  1 & -1 & -1
\end{array}
\right)$\\

\section{Clifford Algebra}
In Clifford algebra \cite{BS1970}, the same unit basis vectors $\mathcal{B}=\{i_p | p\in \Z{N}\}$ and the same bit-wise `exclusive or' group operation on $\Z{N}$ may be used. Only the twist $\clf$  will differ.

In Clifford algebra, the unit basis vectors are called `blades'. Each blade has a numerical `grade'. The grade of the blade $i_p$ is the sum of the bits of the binary representation of $p$ denoted here by $\sob{p}.$

$i_0=1$ is the unit scalar, and is a 0-blade.

$i_1,i_2,i_4,\cdots,i_{2^n}\cdot$ are 1-blades, or `vectors' in Clifford algebra parlance.

$i_3,i_5,i_6,\cdots$ are 2-blades or `bi-vectors'. 

$i_7,i_{11},i_{13},i_{14},\cdots$ are 3-blades or `tri-vectors', etc.

As was the case with Cayley-Dickson algebras, this is not the standard notation for the basis vectors. However, it has the advantage that the product of unit basis vectors satisfies $i_pi_q=\clf(p,q)i_{pq}$ for a suitably defined Clifford twist $\clf$ on $\Z{N}.$

In the standard notation, 1-blades or `vectors' are denoted $e_1,e_2,e_3,\cdots,$ whereas 2-blades or `bivectors' are denoted $e_{12}, e_{13}, e_{23}, \cdots$ etc.

Translating from the $e$-notation to the $i$-notation is straightforward. For example, the 3-blade $e_{134}$ translates as $i_{13}$ since the binary representation of 13 is 1101 with bits 1, 3 and 4 set. The 2-blade $e_{23}=i_6$ since the binary representation of 6 is 110, with bits 2 and 3 set.

All the properties of $n$-blades can be deduced from five fundamental properties:
\begin{enumerate}
\item The square of 1-blades is 1. 
\item The product of 1-blades is anticommutative.
\item The product of 1-blades is associative.
\item The conjugate of a 1-blade is its negative.
\item Every $n$-blade can be factored into the product of $n$ distinct 1-blades.
\end{enumerate}

The convention is that, if $j<k,$ then $e_je_k=e_{jk},$ thus $e_ke_j=-e_{jk}.$

Any two $n$-blades may be multiplied by first factoring them into 1-blades. For example, the product of $e_{134}$ and $e_{23},$ is computed as follows:
\begin{align*} e_{134}e_{23}&=e_1e_3e_4e_2e_3\\
&=-e_1e_4e_3e_2e_3\\
&=e_1e_4e_2e_3e_3\\
&=e_1e_4e_2\\
&=-e_1e_2e_4\\
&=-e_{124}
\end{align*}

Since $e_{134}=i_{13}$ and $e_{23}=i_6,$ and the bit-wise `exclusive or' of 13 and 6 is 11, the same product using the `$i$' notation is
\[ i_{13}i_6=\clf(13,6)i_{11} \] so evidently, $\clf(13,6)=-1.$

The Clifford twist $\clf$ can be defined recursively as follows:
\begin{eqnarray}
\clf(0,0)&=&1\\
\clf(2p+1,2q)&=&\clf(2p,2q)=\clf(p,q)\\
\clf(2p+1,2q+1)&=&\clf(2p,2q+1)=\sbf{p}\clf(p,q)
\end{eqnarray}

For Clifford algebra, $x+\conj{x}$ is \emph{not} generally a real number.

Since
\begin{equation}
 \conj{i}_p=\clf(p,p)i_p
\end{equation}
and since it can be shown that
\begin{equation} \clf(p,p)=(-1)^s \text{\ where\ } s=\frac{\sob{p}\left(\sob{p}+1\right)}{2}
\end{equation}
it follows that, if $i_p$ is an $n$-blade, then 
\begin{equation}
 \conj{i}_p=\begin{cases}
              -i_p &\text{if\ }n=4k+1\text{\ or\ }n=4k+2\\
               i_p &\text{if\ }n=4k\text{\ or\ }n=4k+3
            \end{cases}
\end{equation}

This divides the $n$-blades into the `real' blades and the `imaginary' blades, where
\begin{enumerate} 
\item $0$-blades are real
\item $1$-blades are imaginary
\item $(n+2)$-blades have the opposite `parity' of $n$-blades.
\end{enumerate}

 The Clifford twist $\clf$ can be shown to be associative: 
\begin{equation}\clf(p,q)\clf(pq,r)=\clf(p,qr)\clf(q,r)\end{equation}

Since associative twists are proper, Clifford algebras are proper *-algebras. That is, for multivectors $x,y$ and $z$,
\begin{equation} \ip{xy}{z}=\ip{y}{\conj{x}z}\end{equation} \begin{equation}\ip{x}{yz}=\ip{x\conj{z}}{y}\end{equation}

\section{Conclusion}

There are a number of interesting questions about \emph{properly} twisted group algebras.

The union $\displaystyle{ \cds{\infty}=\bigcup_{N=0}^\infty\cds{N}}$ of all the finite dimensional Cayley-Dickson algebras is itself a Cayley-Dickson algebra. However, it is incomplete. The Hilbert space $\ell^2\supset\cds{\infty}$ of square-summable sequences is a natural completion for $\cds{\infty}.$ Furthermore, for $x,y\in\ell^2$ the product $xy$ is a well-defined sequence. However, the product $xy$ is not obviously square-summable so it is not clear that $\ell^2$ is closed under the Cayley-Dickson product. It is easily shown, though, that if $x\in\ell^2$ and $y\in\cds{N}$, then $xy\in\ell^2.$ 

\begin{question}
 Is the Hilbert space $\ell^2$ of square-summable sequences a Cayley-Dickson algebra?
\end{question}

It follows from a result of Wedderburn \cite{W1925} that for $x,y\in\cds{N}$ $\norm{xy}\le\sqrt{2^N}\norm{x}\norm{y}$. Computer generation of random products $xy$ in $\cds{9}$ by the author has never produced a value of $\displaystyle \frac{\norm{xy}}{\norm{x}\norm{y}}$ larger than $\sqrt{2}$. It is tempting to conjecture that $\norm{xy}\le\sqrt{2}\norm{x}\norm{y}$ for all Cayley-Dickson algebras. If the conjecture is true, then the preceding question may be answered in the affirmative.

\begin{question} Is there a Cayley-Dickson algebra and elements $x$ and $y$ such that $\norm{xy}>\sqrt{2}\norm{x}\norm{y}$ ?
\end{question}

In the Clifford algebras, the $n$-blades are distinct products of $n$ 1-blades and the properties of the basis vectors derive from the properties of 1-blades. A similar blade construction can be specified for the basis vectors of Cayley-Dickson algebras.

Cayley-Dickson basis vectors, or $n$-blades should be deducible from the properties of Cayley-Dickson 1-blades:
\begin{enumerate}
\item The square of 1-blades is $-1$. 
\item The product of 1-blades is anticommutative.
\item The conjugate of a 1-blade is its negative.
\item Every $n$-blade can be factored into the `left'-product of $n$ distinct 1-blades.
\end{enumerate}

An example of this last is $i_{51}=e_{1245}=e_1(e_2(e_4e_5)$. This works since, by induction, $\cyd(2^n,k2^{n+1})=1$ for all non-negative integers $n$ and positive integers $k.$ The Cayley-Dickson product is non-associative past the quaternions, so working out the correct sign for the product of Cayley-Dickson blades from their factors would be a challenge.

\begin{question}
 Can any insights into Cayley-Dickson algebras be gleaned from representing the basis vectors as products of 1-blades?
\end{question}

The Cayley-Dickson spaces are implicit in equations \ref{E:conj} and \ref{E:pairproduct}.

\begin{question}
 Do there exist equations corresponding to equations \ref{E:conj} and \ref{E:pairproduct} for the Clifford algebras?
\end{question}

The existence of the Fourier expansion (Theorem \ref{T:product}) and the adjoint properties of a twisted group product depend upon the twist being proper. This makes propriety an interesting twist property.
\begin{question}
 How many proper twists exist for a given group? Which of these have interesting algebras?
\end{question}

The proper twists on the groups $\mathbb{Z}_2,\mathbb{Z}_3.\mathbb{Z}_4$ and $\mathbb{Z}_5$ can be found by `back of the envelope' investigation, and they all are commutative and associative.

\begin{question}
 Are all proper twists on $\mathbb{Z}_N$ commutative and associative?
\end{question}

If $G$ is a group and if $\mathcal{P}(G)$ is the set of all proper twists on $G$, then $\mathcal{P}(G)$ is itself an abelian group with identity the \emph{trivial twist} $\iota(p,q)=1$ for all $p,q\in G$ and product $\alpha\beta(p,q)=\alpha(p,q)\beta(p,q)$ for $\alpha,\beta\in\mathcal{P}(G).$
\begin{question}
 How is the abelian group $\mathcal{P}(G)$ related to $G?$ Is there an interesting theory here?
\end{question}

\end{document}